\newcommand{\SDpagebegin}{901}
\newcommand{\SDpageend}{9999}
\def\@maketitle{%
  \newpage
  {\vspace*{-11ex}%
  \small\noindent\begin{tabular}{@{}l}
  \end{tabular}}
  \null
  \vskip 5em%
  \begin{flushleft}%
    {\LARGE \bf\@title \par}
    \vskip 1.5em%
       {\large\bf \lineskip .5em \@author \par}%
    \vskip 1.0em%
  \end{flushleft}%
  \par
  \vskip 1.5em}
\renewcommand\section{\@startsection {section}{1}{\z@}%
                                   {-3.5ex \@plus -1ex \@minus -.2ex}%
                                   {1.5ex \@plus.2ex}%
                                   {\normalfont\Large\bfseries}}
\renewcommand\subsection{\@startsection{subsection}{2}{\z@}%
                                     {-3.0ex\@plus -2ex \@minus -.2ex}%
                                     {1.0ex \@plus .2ex}%
                                     {\normalfont\large\bfseries}}
\renewcommand\subsubsection{\@startsection{subsubsection}{3}{\z@}%
                                     {-2.5ex\@plus -2ex \@minus -.2ex}%
                                     {0.9ex \@plus .2ex}%
                                     {\normalfont\normalsize\bfseries}}
\renewcommand\paragraph{\@startsection{paragraph}{4}{\z@}%
                                     {-2.0ex \@plus1ex \@minus.2ex}%
                                     {-1em}%
                                     {\normalfont\normalsize\bfseries}}
\renewcommand\subparagraph{\@startsection{subparagraph}{5}{\parindent}%
                                       {-1.5ex \@plus1ex \@minus .2ex}%
                                       {-1em}%
                                      {\normalfont\normalsize\bfseries}}
\def\@evenhead{\footnotesize\thepage\hfil\slshape\leftmark}%
\def\@oddhead{\footnotesize{\slshape\rightmark}\hfil\thepage}%
\numberwithin{equation}{section} \numberwithin{figure}{section}
\numberwithin{table}{section}
\renewcommand{\@makecaption}[2]{\begin{quote}
\footnotesize {\bf #1}~#2
\end{quote}}
\newcommand{\refeq}[1]{(\ref{#1})} 
\newenvironment{summary}{\vskip\baselineskip \noindent\small\bf Summary: \rm}%
{\vskip\baselineskip}
\newenvironment{proof}{{\vskip\baselineskip\noindent\textbf{Proof:}}}%
{\hspace*{.1pt}\hspace*{\fill}\BOX\vskip\baselineskip}
\newcommand{\BOX}{\ensuremath\Box}
\newtheorem{theorem}{Theorem }[section]
\newtheorem{assumption}[theorem]{Assumption}
\newtheorem{lemma}[theorem]{Lemma}
\newtheorem{proposition}[theorem]{Proposition}
{\theorembodyfont{\rmfamily}}
{\theorembodyfont{\rmfamily}}
{\vskip\baselineskip\noindent\textbf{Proof of {#1}:}}%
{\hspace*{.1pt}\hspace*{\fill}\BOX\vskip\baselineskip}
{\vskip\baselineskip\noindent\textbf{Proof of Theorem \protect\ref{#1}:}}%
{\hspace*{.1pt}\hspace*{\fill}\BOX\vskip\baselineskip}
{\vskip\baselineskip\noindent\textbf{Proof of Theorems \protect\ref{#1} --
\protect\ref{#2}:}}%
{\hspace*{.1pt}\hspace*{\fill}\BOX\vskip\baselineskip}
\renewcommand{\SDpagebegin}{1}          
\renewcommand{\SDpageend}{\kern1pt{}4}  
\title{Estimating the error distribution function
in nonparametric regression
\footnote{Technical report, 2004 \newline
{\em AMS 2000 subject classification:} Primary 62G05, 62G08, 62G20 \newline
{\em Key words and phrases:} Local polynomial smoother, kernel estimator,
under-smoothing, plug-in estimator, error variance,
empirical likelihood, adaptive estimator,
efficient estimator, influence function}}
\author{Ursula U. M\"uller,
Anton Schick,
Wolfgang Wefelmeyer}
\def\n{\noindent}
\def\b{\bigskip}
\def\tf{\textbf}
\def\ben{\begin{eqnarray}}
\def\be*{\begin{eqnarray*}}
\def\non{\end{eqnarray}}
\def\no*{\end{eqnarray*}}
\newcommand{\bel}[1]{\begin{equation}\label{#1}}
\newcommand{\ee}{\end{equation}}
\newcommand{\e}{\varepsilon}
\newcommand{\he}{\hat \varepsilon}
\newcommand{\vt}{\vartheta}
\newcommand{\maxi}{\max_{1 \leq i \leq n}}
\newcommand{\maxj}{\max_{1 \leq j \leq n}}
\newcommand{\avi}{\frac{1}{n} \sum_{i=1}^n}
\newcommand{\avj}{\frac{1}{n} \sum_{j=1}^n}
\newcommand{\R}{\mathbb{R}}
\newcommand{\x}{{\times}}
\newcommand{\supx}{\sup_{0\leq x \leq 1}}
\newcommand{\infx}{\inf_{0\leq x \leq 1}}
\newcommand{\supt}{\sup_{t\in \R}}
\newcommand{\suptn}{\sup_{|t|\leq n^{1/3}}}
\newcommand{\und}{\quad\textrm{and}\quad}
\def\und{\quad \mbox{and} \quad}
\def\1{\mathbf{1}}
\newcommand{\ind}[1]{\1{\{ #1 \}}}
\def\hr{\hat r}
\def\F{\mathbb{F}}
\def\hF{\hat\F}
\def\hFs{\hF_*}
\def\bFs{\bar{\F}_*}
\begin{document}
\maketitle
\thispagestyle{empty}

\begin{summary}
We construct an efficient estimator for the error distribution
function of the nonparametric regression model $Y = r(Z) + \e$.
Our estimator is a kernel smoothed empirical distribution function based on
residuals from an under-smoothed local quadratic smoother for
the regression function.
\end{summary}

\section{Introduction}

Consider the nonparametric regression model
$Y = r(Z) + \e$, where the covariate $Z$ and the error $\e$
are independent, and $\e$ has mean zero,
finite variance $\sigma^2$ and density $f$.
We observe independent copies $(Y_1,Z_1),\dots,(Y_n,Z_n)$ of $(Y,Z)$
and want to estimate the distribution  function $F$ of $\e$.
If the regression function $r$ were known, we could use the
empirical distribution function $\F$ based on the errors
$\e_1,\dots,\e_n$, defined by
\[
\F(t) = \avi \ind{\e_i \leq t}.
\]
We consider the regression function as unknown and
propose a kernel smoothed empirical distribution function $\hFs$
based on residuals from an under-smoothed local quadratic smoother for
the regression function.
We give conditions under which $\hFs$ is asymptotically equivalent to $\F$
plus some correction term:
\ben
\label{A}
\supt n^{1/2} \Big| \hFs(t) - \F(t) - f(t) \avi \e_i \Big|
= o_p(1).
\non
Smoothing the empirical distribution function is appropriate because
we assume that the error distribution has a Lipschitz density
and therefore a smooth distribution function.
A local quadratic smoother for the regression function is appropriate
because we assume that the regression function is twice
continuously differentiable.

It follows from \refeq{A} that $\hFs(t)$ has influence function
\[
\ind{\e \leq t} - F(t) + f(t)\e.
\]
M\"uller, Schick and Wefelmeyer (2004a) show that this is the
efficient influence function for estimators of $F(t)$.
Hence $\hFs$ is efficient
for $F$ in the sense that $(\hFs(t_1),\dots,\hFs(t_k))$
is a least dispersed regular estimator of $(F(t_1),\dots,F(t_k))$
for all $t_1 < \dots < t_k$ and all $k$.
The influence function of our estimator coincides with the efficient 
influence function in the model with \emph{constant} regression function;
see Bickel, Klaassen, Ritov and Wellner (1998, Section 5.5, Example 1).

It follows in particular from \refeq{A} that $\hFs(t)$ has
asymptotic variance
\[
F(t)(1-F(t))+ \sigma^2 f^2(t)  - 2 f(t) \int_{t}^{\infty} xf(x)\,dx .
\]
If  $f$ is a normal density, this simplifies to
\[
F(t)(1-F(t)) - \sigma^2  f^2(t).
\]
Hence, for normal errors, the asymptotic variance of
$\hFs(t)$ is strictly smaller than the asymptotic variance
$F(t)(1-F(t))$ of the empirical estimator $\F(t)$
based on the \emph{true} errors. This paradox is explained by the fact that
the empirical estimator $\F(t)$ is not efficient:
Unlike $\hFs(t)$, it does not
make use of the information that the errors have mean zero.
The efficient influence function for estimators of $F(t)$
from mean zero observations $\e_1,\dots,\e_n$ is
\[
\ind{\e \leq t} - F(t) - C_0(t)\e
\quad \mbox{with} \quad
C_0(t) = \sigma^{-2} \int_{-\infty}^t x f(x)\,dx;
\]
see Levit (1975).
Efficient estimators for $F(t)$ from observations
$\e_1,\dots,\e_n$ are
\[
\F(t) - \hat C_0(t) \avi \e_i
\quad \mbox{with} \quad
\hat C_0(t) = \frac{\sum_{i=1}^n \e_i \ind{\e_i \leq t}}
{\sum_{i=1}^n \e_i^2},
\]
and the empirical likelihood estimator
\[
\avi p_i \ind{\e_i \leq t}
\]
with (random) probabilities $p_i$ maximizing $\prod_{i=1}^n p_i$
subject to $\sum_{i=1}^n p_i \e_i = 0$. The empirical likelihood was
introduced by Owen (1988), (1990); see also Owen (2001).
The asymptotic variance of an efficient estimator $\F_0(t)$
for $F(t)$ from $\e_1,\dots,\e_n$ is
\[
F(t)(1-F(t)) - \sigma^{-2} \Big( \int_t^{\infty} x f(x)\,dx \Big)^2.
\]
The variance increase of our estimator $\hFs(t)$ 
over $\F_0(t)$ is therefore
\[
\Big( \sigma f(t) - \sigma^{-1} \int_t^{\infty} x f(x)\,dx \Big)^2.
\]
This is the price for not knowing the regression function.
For normal errors this term is zero, and we lose nothing.
We refer also to the introduction of M\"uller, Schick and Wefelmeyer (2004b).

Our proof is complicated by two features of the model: 
the error distribution cannot be estimated adaptively
with respect to the regression function, 
and the regression function cannot be estimated at the efficient rate
$n^{-1/2}$. Akritas and Van Keilegom (2001) encountered these problems
in a related model, the heteroscedastic regression model $Y=r(Z)+s(Z)\e$.
They used different techniques and stronger assumptions to get an expansion 
similar to \refeq{A}. Their results do not cover ours in our simpler model.

Previous related results are easier because at least one
of these complicating features is missing. Loynes (1980)
assumes that $Y = h(Z,\vt)$.
Koul (1969), (1970), (1987), (1992), Shorack (1984),
Shorack and Wellner (1986, Section 4.6) and Bai (1996)
consider linear models $Y = \vt^\top Z  + \sigma\e$.
Mammen (1996) studies the linear model as the dimension of $\vt$
increases with $n$. Klaassen and Putter (1997) and (2001)
construct efficient estimators for the error distribution function
in the linear regression model $Y = \vt^\top Z  + \e$.
Koshevnik (1996) treats the nonparametric regression model
$Y =r(Z) + \e$ with error density symmetric about zero; 
an  efficient estimator for $F$ is obtained by symmetrizing
the  empirical distribution function based on residuals.
Related results exist for time series.
See Boldin (1982), Koul (2002, Chapter 7) and Koul and Leventhal (1989)
for linear autoregressive processes $Y_j = \vt Y_{j-1} + \e_j$;
Kreiss (1991) and Schick and Wefelmeyer (2002b) for invertible linear
processes $Y_j = \e_j + \sum_{k=1}^\infty \alpha_k(\vt) \e_{j-k}$;
and Koul (2002, Chapter 8), Schick and Wefelmeyer (2002a)
and M\"uller, Schick and Wefelmeyer (2004c, Section 4) for nonlinear 
autoregressive processes $Y_j = r(\vt,Y_{j-1}) + \e_j$.
For invertible linear processes, Schick and Wefelmeyer (2004)
show that the smoothed residual-based empirical estimator is asymptotically
equivalent to the empirical estimator based on the true innovations.
General considerations on empirical processes based on estimated
observations are in Ghoudi and R\'emillard (1998).

Our result gives efficient estimators $\int h(t)\,d\hFs(t)$
for linear functionals $E[h(\e)]$ with bounded $h$.
For smooth and $F$-square-integrable functions $h$,
it is easier to prove an i.i.d.\ representation analogous to \refeq{A}
directly; see M\"uller, Schick and Wefelmeyer (2004a),
who  also use an under-smoothed estimator for the regression function.
M\"uller, Schick and Wefelmeyer (2004b) compare these results with estimation
in the larger model in which one assumes $E(\e|Z)=0$ rather than
independence of $\e$ and $Z$ with $E[\e]=0$.
A particularly simple special case is the error variance $\sigma^2$,
with $h(x) = x^2$. For the estimator $\avi \hat\e_i^2$
based on residuals $\hat\e_i = Y_i - \hr(Z_i)$
with kernel estimator $\hr$, under-smoothing
is not needed. The asymptotic variance of this  estimator was already
obtained in Hall and Marron (1990).
M\"uller, Schick and Wefelmeyer (2003) show that a
covariate-matched U-statistic is efficient for $\sigma^2$;
it does not require estimating $r$ but uses a kernel density
estimator for the covariate density $g$. There is a large literature
on simpler, inefficient, difference-based estimators for $\sigma^2$;
reviews are Carter and Eagleson (1992) and
Dette, Munk and Wagner (1998) and (1999).

We can write
\[
F(t) = \int \ind{y - r(z) \leq t} Q(dy,dz),
\]
where $Q$ is the distribution of $(Y,Z)$. Our estimator is obtained
by plugging in estimators for $Q$  and $r$. For $Q$  we use essentially
the empirical distribution; for $r$ we use a local quadratic smoother
that is under-smoothed and hence does not have the optimal rate for
estimating $r$. This means that our estimator does not obey the plug-in
principle of Bickel and Ritov (2000) and (2003).

The paper is organized as follows. Section 2 introduces our estimator and states,
in Theorem \ref{thm:main}, the assumptions needed for expansion \refeq{A}.
Section 3 derives some consequences of exponential inequalities,
and Section 4 contains properties of local polynomial smoothers.
Section 5 gives the proof of Proposition \ref{prop:2}.

\section{The estimator and the main result}

Let us now define our estimator.
We begin be defining the residuals.
This requires an estimator $\hr$ of the regression function.
We take $\hr$  to be a local quadratic smoother. To define it
we need a kernel $w$ and a bandwidth $c_n$.
A local quadratic smoother $\hr$ of $r$ is
defined as $\hr(x) = \beta_0(x)$ for $x\in [0,1]$,
where $\beta(x) = (\beta_0(x),\beta_1(x),\beta_2(x))^\top$ is the minimizer of
\[
\sum_{j=1}^n \Big( Y_j - \beta_0 - \beta_1(Z_j-x) - \beta_2(Z_j-x)^2 \Big)^2
\frac{1}{c_n} w \Big( \frac{Z_j-x}{c_n} \Big).
\]
The residuals of the regression estimator $\hr$ are
\[
\he_i = Y_i - \hr(Z_i), \quad i=1,\dots,n.
\]
Let $\hF$ denote the empirical distribution function
based on these residuals:
\[
\hF(t) = \avi \ind{\hat \e_i \leq t}, \quad t \in \R.
\]
Our estimator of the error distribution function will be a
smoothed version of $\hF$.
To this end, let $k$ be a density and $a_n$ another bandwidth.
Then we define our estimator $\hFs$ of $F$ by
\[
\hFs(t) = \int \hF(t-a_nx) k(x)\,dx,
\quad t\in \R.
\]
With $K$ the distribution function of $k$, we can write
\[
\hFs(t) = \int K \Big( \frac{t-x}{a_n} \Big)\,d\hF(x)
= \avi K \Big( \frac{t-\hat \e_i}{a_n} \Big),
\quad t\in \R.
\]
This shows that $\hFs$ is the convolution of the empirical
distribution function $\hF$ of the residuals
with the distribution function $t\mapsto K(t/a_n)$.
Alternatively,
$\hFs$ is the distribution function with density $f_*$ given by
\[
f_*(t) = \frac{1}{na_n} \sum_{i=1}^n k\Big( \frac{t-\hat \e_i}{a_n} \Big),
\quad t\in \R.
\]
This is the usual kernel density estimator of $f$ based on the residuals,
with kernel $k$ and bandwidth $a_n$.
We make the following assumptions.

\begin{assumption}\label{ass:G}
The covariate density $g$ is bounded and bounded away from zero on $[0,1]$,
and its restriction to $[0,1]$ is (uniformly) continuous.
\end{assumption}

\begin{assumption}\label{ass:R}
The regression function $r$ is twice continuously differentiable.
\end{assumption}

\begin{assumption}\label{ass:F}
The error density $f$ is Lipschitz, has mean zero, and satisfies
the moment condition $\int |x|^\gamma f(x)\,dx < \infty$ for some $\gamma >4$.
\end{assumption}

\begin{assumption}\label{ass:K}
The density $k$ is symmetric, twice continuously differentiable,
and has compact support $[-1,1]$.
\end{assumption}

\begin{assumption}\label{ass:W}
The kernel $w$ used to define the local quadratic smoother
is  a symmetric density which has compact support $[-1,1]$
and a bounded derivative $w'$.
\end{assumption}

\begin{assumption}\label{ass:BW}
The bandwidths satisfy
$a_n \sim n^{-1/4}/\log n$ and $c_n \sim n^{-1/4}$.
\end{assumption}

Note that $c_n$ is smaller than the optimal bandwidth
under Assumptions \ref{ass:G} and \ref{ass:R}.
Such a bandwidth would be proportional to $n^{-1/5}$.
This means that our choice of bandwidth results in an under-smoothed
local quadratic smoother.

We are now ready to state our main result.

\begin{theorem}\label{thm:main}
Suppose that Assumptions \ref{ass:G} to \ref{ass:BW} hold. Then
\[
\supt n^{1/2} \Big| \hFs(t) - \F(t) - f(t) \avi \e_i \Big| = o_p(1).
\]
In particular, $n^{1/2}(\hFs-F)$ converges in distribution
in the space $D([-\infty,\infty])$ to a centered Gaussian process.
\end{theorem}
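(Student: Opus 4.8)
The plan is to compare $\hFs$ first with the smoothed empirical distribution function of the \emph{true} errors, $\bFs(t)=\avi K\big((t-\e_i)/a_n\big)$, and then to expand the effect of replacing each $\e_i$ by the residual $\he_i=\e_i-U_i$, where $U_i:=\hr(Z_i)-r(Z_i)$. For the first step I would show $\supt n^{1/2}|\bFs(t)-\F(t)|=o_p(1)$: the bias $E\big[K((t-\e)/a_n)\big]-F(t)=\int k(u)\big(F(t-a_nu)-F(t)\big)\,du$ is $O(a_n^2)=o(n^{-1/2})$ by symmetry of $k$, the Lipschitz property of $f$ and Assumption~\ref{ass:BW}; and $n^{1/2}(\bFs-\F)$ minus this bias is the empirical process of the functions $x\mapsto K((t-x)/a_n)-\ind{x\le t}$, which are uniformly bounded, form a VC-type class, and have variances tending to zero uniformly in $t$, so asymptotic equicontinuity gives the uniform $o_p(n^{-1/2})$ bound.

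A second-order Taylor expansion of $K$ then gives
\[
\hFs(t)-\bFs(t)=\frac1{a_n}\avi U_i\,k\Big(\frac{t-\e_i}{a_n}\Big)+\frac1{2a_n^2}\avi U_i^2\,k'\Big(\frac{t-\e_i+\tau_i U_i}{a_n}\Big),\qquad\tau_i\in[0,1].
\]
The remainder term is \emph{not} negligible by size alone: $\avi U_i^2$ is of order $(nc_n)^{-1}$, so a crude bound via $\|k'\|_\infty$ leaves something of order $a_n^{-2}(nc_n)^{-1}\asymp n^{-1/4}$ up to logarithms. The reason $K$ is required to be smooth is that $E\big[k'((t-\e)/a_n)\big]=O(a_n^2)$ by the Lipschitz property of $f$; replacing $U_i$ by its leading term $\sum_j W_{nj}(Z_i)\e_j$, which is independent of $\e_i$ given the covariates, the remainder has conditional mean of order $\avi E[U_i^2]=O((nc_n)^{-1})=o(n^{-1/2})$ and smaller fluctuation, so $\supt n^{1/2}|\text{remainder}|=o_p(1)$. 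The same cancellation of factors $a_n^{-1}$ against kernel moments recurs below, rendering negligible several terms that look too large once their conditional means are computed.

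The linear term is the substantive part. Using the local-quadratic-smoother properties of Section~4 I would write $U_i=\sum_{j=1}^n W_{nj}(Z_i)\e_j+B_n(Z_i)$, where the weights $W_{nj}$ are functions of the covariates only, $W_{nj}(Z_j)=O((nc_n)^{-1})$, and $\supx|B_n(x)|=o_p(c_n^2)=o_p(n^{-1/2})$ because $r\in C^2$ makes the local quadratic fit reproduce the quadratic Taylor polynomial with a $o(c_n^2)$ remainder. The bias contribution $a_n^{-1}\avi B_n(Z_i)k((t-\e_i)/a_n)$ has conditional mean $E[B_n(Z_i)](k_{a_n}\!*\!f)(t)=o(n^{-1/2})$ and negligible fluctuation. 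Writing the stochastic contribution as $\frac1n\sum_{i,j}W_{nj}(Z_i)\e_j\,a_n^{-1}k((t-\e_i)/a_n)$, the diagonal $i=j$ is $o_p(n^{-1/2})$ since $W_{ni}(Z_i)=O((nc_n)^{-1})$ while $E\big[\e\,a_n^{-1}k((t-\e)/a_n)\big]=O(1)$. For the off-diagonal part I would condition on the covariates and Hoeffding-decompose in the errors: the projection onto $\e_j$ equals $(k_{a_n}\!*\!f)(t)\,\frac1n\sum_j\e_j\sum_{i\ne j}W_{nj}(Z_i)$; since the design sums $\sum_{i\ne j}W_{nj}(Z_i)$ are $1+o_p(1)$ and enter only through $\frac1n\sum_j\e_j(\sum_{i\ne j}W_{nj}(Z_i)-1)$, which has zero mean and variance $o(n^{-1})$, and since $(k_{a_n}\!*\!f)(t)\to f(t)$ uniformly, this projection equals $f(t)\avi\e_i+o_p(n^{-1/2})$ uniformly in $t$. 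The remaining degenerate piece $\frac1n\sum_{i\ne j}W_{nj}(Z_i)\,\e_j\big(a_n^{-1}k((t-\e_i)/a_n)-(k_{a_n}\!*\!f)(t)\big)$ has conditional variance of order $(n^2a_nc_n)^{-1}\asymp n^{-3/2}\log n$, hence is $o_p(n^{-1/2})$ for each $t$; uniformity over $t$ then follows from the exponential inequalities of Section~3 by chaining over a grid of $t$, using that $\{k((t-\cdot)/a_n):t\in\R\}$ is of VC type. Collecting the pieces yields the displayed expansion.

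The final assertion is then routine: $n^{1/2}\big(\F(t)+f(t)\avi\e_i-F(t)\big)=n^{1/2}\avi\big(\ind{\e_i\le t}-F(t)+f(t)\e_i\big)$ is the empirical process over a Donsker class indexed by $t$ (VC indicators plus a scalar multiple of the fixed $F$-integrable function $\e\mapsto\e$), $F$ is continuous and $f(t)\to0$ as $|t|\to\infty$, so it converges in distribution in $D([-\infty,\infty])$ to a centered Gaussian process, and by the expansion so does $n^{1/2}(\hFs-F)$. The main obstacle I anticipate is the degenerate off-diagonal term together with its uniformity over $t$: one needs conditional variance bounds sharp enough to beat $n^{-1/2}$, for which the logarithmic factor in $a_n$ supplies precisely the margin, and a maximal/exponential inequality for this bilinear functional of the errors and covariates — exactly what Sections~3 and~4 and Proposition~\ref{prop:2} are built to deliver. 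The other technically heavy ingredient is the uniform control of the local polynomial weights $W_{nj}$, their leverages and the design sums $\sum_i W_{nj}(Z_i)$, together with $\maxi|\e_i|=o_p(n^{1/\gamma})=o_p(n^{1/4})$ from Assumption~\ref{ass:F}.
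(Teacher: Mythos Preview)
Your overall architecture matches the paper's: reduce to comparing $\hFs$ with the smoothed empirical of the true errors (the paper's $\F_{a_n}$), then Taylor-expand in the residual correction and show the linear piece produces $f(t)\avi\e_i$ via a Hoeffding decomposition whose degenerate part is handled with exponential inequalities. Your treatment of the bias $\bFs-\F$ and of the linear term is essentially what the paper does in Section~5, and your identification of the degenerate off-diagonal as the technical crux is accurate.

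There is, however, a genuine gap in your handling of the second-order remainder. You stop the Taylor expansion at order two with a Lagrange remainder $\frac{1}{2a_n^2}\avi U_i^2\,k'\big((t-\e_i+\tau_i U_i)/a_n\big)$ and then argue by centering that the conditional mean is $O\big(\avi E[U_i^2]\big)=o(n^{-1/2})$. This does not go through as stated: the intermediate point $\tau_i=\tau_i(t,\e_i,U_i)\in[0,1]$ is random and depends on $\e_i$ and on $U_i$, so you cannot evaluate $E\big[k'((t-\e_i+\tau_iU_i)/a_n)\mid Z\big]$ by the clean identity $E[k'((t-\e)/a_n)]=O(a_n^2)$, nor can you factor $U_i^2$ out of the expectation. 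In addition, $\sum_j W_{nj}(Z_i)\e_j$ is \emph{not} independent of $\e_i$ given the covariates because of the diagonal term $j=i$; that term must be split off separately. The paper resolves both issues by expanding one order further: it writes $\bFs-\F_{a_n}=T_{n,1}+\tfrac12 T_{n,2}+\tfrac12 R_n$ with $T_{n,2}=\frac{1}{na_n^2}\sum_j k'\big((t-\e_j)/a_n\big)\delta_{n,j}^2$ evaluated at the \emph{clean} point and a genuine third-order Lagrange remainder $R_n$ involving $k''$. This is exactly why Assumption~\ref{ass:K} requires $k$ twice continuously differentiable. The cubic remainder is then negligible by size alone, $\suptn|R_n(t)|=O_p(a_n^{-2}q_n^3)=o_p(n^{-1/2})$ via the density-type bound \refeq{phipsi}, and the clean quadratic $T_{n,2}$ can be centered and decomposed as you intended (the paper's $S_{n,2}$ uses $\int k'=0$, and $T_{n,2}-S_{n,2}$ is broken into degenerate $U$-statistics $U_{n,1},\dots,U_{n,6}$ with the diagonal handled explicitly). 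If you push your expansion one step further and separate the diagonal, the rest of your outline becomes a correct proof along the paper's lines.
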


\begin{proof}
For $a \in \R$ and $t \in \R$ set
\[
F_a(t) = \int F(t-ax)k(x)\,dx \und
\F_a(t) = \int \F(t-ax)k(x)\,dx.
\]
Since the density $k$ has mean zero by Assumption \ref{ass:K},
we have
\be*
F_a(t) - F(t) & = & \int \big( F(t-ax) - F(t) + ax f(t)\big) k(x)\,dx \\
 & = & \int (-ax) \int_0^1 \big( f(t-axy) - f(t) \big)\,dy\,k(x)\,dx.
\no*
Thus the Lipschitz continuity of $f$ yields
\[
\supt \big| F_{a_n}(t)-F(t) \big| = O(a_n^2) = o(n^{-1/2}).
\]
It follows from standard empirical process theory that
\bel{gn}
G_n = n^{1/2} \sup_{x\in \R} |\F_{a_n}(x)-F_{a_n}(x) - \F(x)+F(x)| = o_p(1),
\quad a_n \to 0.
\ee
Indeed, with $W_n = n^{1/2}(\F-F)$ we have
\[
G_n = \supt \Big|\int (W_n(t-a_ns)-W_n(t)) k(s)\,ds \Big|
\leq \supt \sup_{|s|\leq |a_n|} |W_n(t+s)-W_n(t)|.
\]
The above shows that
\[
\supt n^{1/2} \big| \F_{a_n}(t)-\F(t) \big| = o(1).
\]
Hence the desired result follows from Proposition \ref{prop:2} below.
\end{proof}

\begin{proposition}\label{prop:2}
Suppose that Assumptions \ref{ass:G} to \ref{ass:BW} hold.
Then
\[
\supt n^{1/2} \Big| \hFs(t) - \F_{a_n}(t) - f(t) \avi \e_i \Big| = o_p(1).
\]
\end{proposition}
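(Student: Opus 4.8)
The plan is to write the residuals as $\he_i=\e_i-\hat\Delta(Z_i)$ with $\hat\Delta=\hr-r$, to Taylor expand the kernel distribution function $K$ about the point $(t-\e_i)/a_n$, and to show that only the term linear in $\hat\Delta$ matters and that it reduces to $f(t)\avi\e_i$. Since $\hFs(t)=\avi K((t-\he_i)/a_n)$ and $\F_{a_n}(t)=\avi K((t-\e_i)/a_n)$, and $k\in C^2$ by Assumption~\ref{ass:K}, a three-term expansion gives
\[
\hFs(t)-\F_{a_n}(t)=\frac1{a_n}\avi\hat\Delta(Z_i)\,k\Big(\frac{t-\e_i}{a_n}\Big)+\frac1{2a_n^2}\avi\hat\Delta(Z_i)^2\,k'\Big(\frac{t-\e_i}{a_n}\Big)+R_n(t),
\]
with $|R_n(t)|\le\tfrac16\sup|k''|\,a_n^{-3}\avi|\hat\Delta(Z_i)|^3$. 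Writing $T_{1n}$ and $T_{2n}$ for the first two terms, it suffices to prove (i) $\supt n^{1/2}|T_{1n}(t)-f(t)\avi\e_i|=o_p(1)$ and (ii) $\supt n^{1/2}(|T_{2n}(t)|+|R_n(t)|)=o_p(1)$.

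For (ii) I would use the uniform rate $\supx|\hat\Delta(x)|=O_p((n^{-1}c_n^{-1}\log n)^{1/2})=O_p(n^{-3/8}(\log n)^{1/2})$ from the local polynomial estimates of Section~4, together with the compact support of $k'$ and $k''$. Because $\supx|\hat\Delta(x)|/a_n=o_p(1)$ under Assumption~\ref{ass:BW}, only the indices with $|\e_i-t|\le 2a_n$ enter $T_{2n}$ and $R_n$, and $\supt n^{-1}\#\{i:|\e_i-t|\le 2a_n\}=O_p(a_n)$ by a routine empirical process bound using boundedness of $f$; this already gives $R_n(t)=O_p(a_n^{-2}\supx|\hat\Delta|^3)=o_p(n^{-1/2})$ uniformly, with room to spare. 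For $T_{2n}$ the same crude bound leaves a spurious factor $(\log n)^2$, so one must instead replace $k'((t-\e_i)/a_n)$ by its conditional mean, which is $O(a_n^2)$ because $\int k'=0$ and $f$ is Lipschitz, and use $\avi\hat\Delta(Z_i)^2=O_p((nc_n)^{-1})$; the resulting bound $O_p((nc_n)^{-1})$ is $o_p(n^{-1/2})$, at the cost of a side estimate for the centered fluctuation, which is a double sum of the type treated in (i).

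For (i) I would substitute the linearization $\hat\Delta(x)=\sum_jW_{nj}(x)\e_j+\beta_n(x)$, where the $W_{nj}$ are the equivalent kernel weights of the local quadratic smoother -- of order $(nc_n)^{-1}$, supported on $|Z_j-x|\le c_n$, with $\sum_jW_{nj}(x)=1$ -- and $\supx|\beta_n(x)|=o(c_n^2)=o(n^{-1/2})$ by Assumption~\ref{ass:R} and the reproducing property of the smoother; the $\beta_n$ part of $T_{1n}$ is then $o_p(n^{-1/2})$ uniformly. In the ensuing double sum $(a_n n)^{-1}\sum_i\sum_jW_{nj}(Z_i)\e_j k((t-\e_i)/a_n)$, the diagonal term $i=j$ is $O_p((1+|t|)(nc_n)^{-1})$. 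For the off-diagonal part $A_n(t)$, condition on the covariates $\mathcal Z=(Z_1,\dots,Z_n)$: since $E[k((t-\e)/a_n)]=a_n(f(t)+O(a_n))$ uniformly, the conditional expectation of the inner sum is $a_nf(t)\sum_{i\ne j}W_{nj}(Z_i)+O_p(a_n^2)$, and $\sum_{i\ne j}W_{nj}(Z_i)=1+o_p(1)$ uniformly in $j$. Hence $A_n(t)=f(t)\avi\e_i+o_p(n^{-1/2})+D_n(t)$ uniformly, where the $o_p$ term collects an $O_p(a_n)\avi\e_i$ contribution and $f(t)n^{-1}\sum_j\e_j(\sum_{i\ne j}W_{nj}(Z_i)-1)$ -- the latter $o_p(n^{-1/2})$ by a variance bound exploiting independence of the $\e_j$ from $\mathcal Z$ -- and
\[
D_n(t)=\frac1{a_n n}\sum_{i\ne j}\e_j\,W_{nj}(Z_i)\Big\{k\Big(\frac{t-\e_i}{a_n}\Big)-E\,k\Big(\frac{t-\e_i}{a_n}\Big)\Big\}
\]
is the leftover centered double sum, which remains to be shown $o_p(n^{-1/2})$ uniformly in $t$.

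The main obstacle is precisely this uniform control of $D_n$, and of the analogous centered fluctuation in $T_{2n}$: a degenerate double sum indexed by $t$ whose weights $W_{nj}$ depend on the entire covariate sample. I would first bound $E\,D_n(t)^2$ pointwise by a case analysis over the coincidences among the four indices; only the matched cases survive and give $E\,D_n(t)^2=O((\log n+t^2)n^{-3/2})$, so $D_n(t)=o_p(n^{-1/2})$ for each fixed $t$. Then I would pass to the supremum by chaining over a grid in $t$ of mesh $\sim a_n$ -- here the exponential inequalities of Section~3 are used, together with the fact that $t\mapsto D_n(t)$ varies at rate $a_n^{-1}$ times a comparable quantity -- and by truncating $|t|$ at a polynomial rate, which is where the moment condition $\int|x|^\gamma f(x)\,dx<\infty$ with $\gamma>4$ (Assumption~\ref{ass:F}) is used. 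The bandwidths $a_n\sim n^{-1/4}/\log n$ and $c_n\sim n^{-1/4}$ are calibrated so that each of these error terms beats $n^{-1/2}$ with essentially no slack, which explains both the logarithmic factor in $a_n$ and the size of $\gamma$.
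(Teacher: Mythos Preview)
Your overall architecture---Taylor expand $K$ to third order, kill the remainder with the uniform rate for $\hat\Delta=\hr-r$, extract $f(t)\avi\e_i$ from the linear term, and reduce the quadratic term to a centered fluctuation---is exactly the paper's strategy. The gap is in how you propose to control the centered sums $D_n(t)$ (and the analogous fluctuation inside $T_{2n}$).

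You correctly note that the weights $W_{nj}(Z_i)$ depend on the entire covariate sample, since $W_{nj}(x)=n^{-1}A_n(x,Z_j)=n^{-1}p_n(x)^{\top}w_n(Z_j-x)$ involves $p_n(x)=Q_n(x)^{-1}e_1$ and $Q_n(x)$ is built from all $Z_1,\dots,Z_n$. But then $D_n(t)$ is \emph{not} a degenerate U-statistic in $(Z_i,\e_i)$: its ``kernel'' is not a fixed function of two (or three) observations. The lemmas in Section~3 are Hoeffding's inequality for i.i.d.\ sums and the Arcones--Gin\'e inequality for degenerate U-processes with a fixed bounded kernel; neither applies to $D_n$ as written. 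Conditioning on $\mathcal Z$ does not rescue this either, because conditionally the summands have index-dependent coefficients $W_{nj}(Z_i)$ and so do not form a U-statistic in the $\e$'s; and in any case the $\e_j$ are unbounded, so the boundedness hypotheses in both lemmas fail. A pointwise second-moment bound plus chaining is not enough: chaining over a polynomial-size grid needs exponential tails at each grid point, which is precisely what Section~3 is supposed to supply and cannot here.

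The paper resolves both issues in one preparatory step: it replaces $\hr(Z_i)-r(Z_i)$ by
\[
\delta_{n,i}=\avj \bar A_n(Z_i,Z_j)\,\e_{n,j},
\]
where $\bar A_n$ uses the \emph{deterministic} matrix $\bar Q_n=E[Q_n]$ in place of $Q_n$, and $\e_{n,j}$ are errors truncated at $n^{1/\gamma}$ and recentered. Theorem~\ref{thm:lls}, Lemma~\ref{lm:Abar} and Lemma~\ref{lm:3m} make this replacement cost only $o_p(n^{-1/2})$ uniformly. After this substitution every term in the expansion is a genuine degenerate U-statistic in $V_j=(Z_j,\e_j)$ with a bounded kernel, and Lemma~\ref{lm:aux.2} applies directly. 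Note also that your description of the $T_{2n}$ fluctuation as ``a double sum of the type treated in (i)'' understates what is needed: since $\delta_{n,i}^2$ is itself a double sum, the centered fluctuation in $T_{n,2}$ decomposes into pieces of U-statistic order up to three (the paper's $U_{n,1},\dots,U_{n,6}$), and the order-three Arcones--Gin\'e bound is what makes the rates close.

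In short: insert the step $A_n\rightsquigarrow\bar A_n$ and $\e_j\rightsquigarrow\e_{n,j}$ \emph{before} Taylor expanding, and your outline becomes the paper's proof.
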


The proof of Proposition \ref{prop:2} is in Section 5.
We conclude this section with a simple lemma that will
be needed repeatedly in the sequel.

\begin{lemma}\label{lm:3m}
Suppose that $\int |x|^{\beta}\,dF(x)<\infty$ for some $\beta>1$.
Then
\[
\maxi |\e_i| = o_p(n^{1/\beta}).
\]
If $F$ has also mean zero, then, as $A \to \infty$,
\[
E[\e \ind{|\e| \leq A}] = o(A^{1-\beta}).
\]
\end{lemma}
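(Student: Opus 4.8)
The plan is to treat the two assertions separately; both follow from a truncation argument together with the finiteness of $E[|\e|^\beta]=\int|x|^\beta\,dF(x)$, and neither presents a genuine obstacle.

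For the first assertion, fix $\delta>0$. Since the $\e_i$ are independent copies of $\e$, a union bound gives
\[
P\Big(\maxi|\e_i|>\delta n^{1/\beta}\Big)\le nP(|\e|>\delta n^{1/\beta}).
\]
Writing $\{|\e|>\delta n^{1/\beta}\}=\{|\e|^\beta>\delta^\beta n\}$ and applying Markov's inequality to the nonnegative random variable $|\e|^\beta\ind{|\e|>\delta n^{1/\beta}}$ yields
\[
nP(|\e|>\delta n^{1/\beta})\le \delta^{-\beta}E\big[|\e|^\beta\ind{|\e|>\delta n^{1/\beta}}\big].
\]
Because $E[|\e|^\beta]<\infty$, dominated convergence shows the truncated moment on the right tends to $0$ as $n\to\infty$. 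As $\delta>0$ was arbitrary, $\maxi|\e_i|=o_p(n^{1/\beta})$.

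For the second assertion, use $E[\e]=0$ to write $E[\e\ind{|\e|\le A}]=-E[\e\ind{|\e|>A}]$, so that
\[
\big|E[\e\ind{|\e|\le A}]\big|\le E\big[|\e|\ind{|\e|>A}\big].
\]
On $\{|\e|>A\}$ we have $|\e|=|\e|^{1-\beta}|\e|^\beta\le A^{1-\beta}|\e|^\beta$, where the inequality uses $1-\beta<0$; hence
\[
E\big[|\e|\ind{|\e|>A}\big]\le A^{1-\beta}E\big[|\e|^\beta\ind{|\e|>A}\big]=o(A^{1-\beta})
\]
as $A\to\infty$, again since $E[|\e|^\beta]<\infty$ forces $E[|\e|^\beta\ind{|\e|>A}]\to0$.

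The only points requiring care are the direction of the bound $|\e|^{1-\beta}\le A^{1-\beta}$ on $\{|\e|>A\}$, which is exactly where the hypothesis $\beta>1$ is used, and the use of $E[\e]=0$ to replace the lower-truncated mean $E[\e\ind{|\e|\le A}]$ by the upper tail $E[\e\ind{|\e|>A}]$; everything else is routine.
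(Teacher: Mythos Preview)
Your proof is correct and follows essentially the same route as the paper's: a union bound together with the tail form of Markov's inequality for the first claim, and the mean-zero identity $E[\e\ind{|\e|\le A}]=-E[\e\ind{|\e|>A}]$ plus the bound $|\e|\le A^{1-\beta}|\e|^\beta$ on $\{|\e|>A\}$ for the second. Your exposition spells out the role of $\beta>1$ and of dominated convergence a bit more explicitly, but the argument is the same.
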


\begin{proof}
The first conclusion follows by the sharper version
of the Markov inequality: For $a>0$,
\[
P \Big( \maxi |\e_i| > a n^{1/\beta} \Big)
\leq \sum_{i=1}^n P(|\e_i| > a n^{1/\beta})
\leq a^{-\beta} E[|\e|^\beta \ind{|\e| > a n^{1/\beta}}] \to 0.
\]
The second conclusion follows from
\[
|E[\e \ind{|\e| \leq A}]| = |E[\e \ind{|\e| > A} ]|
\leq  A^{1-\beta} E[|\e|^\beta \ind{|\e| > A}] = o(A^{1-\beta}).
\]
In the first equality, we have used that $\e$ has mean zero.
\end{proof}

\section{Auxiliary Results}

In this section we derive some results
that will be used in the proof of Proposition \ref{prop:2}.
Let $(S,\mathfrak{S},Q)$ be a probability space.
For each positive integer $n$ let $V,V_1,\dots,V_n$
be independent $S$-valued random variables with
distribution $Q$, and for each $x$ in $\R$,
let $h_{nx}$ be a bounded measurable function from $S$ into $\R$.
We first study the process $H_n$ defined by
\[
H_n(x) = \avj h_{nx}(V_j)- E[h_{nx}(V)], \quad x \in \R.
\]

\begin{lemma}\label{lm:aux.1}
Let $B_n$ be a sequence of positive numbers such that
$B_n = O(n^{\alpha})$ for some $\alpha >0$. Suppose that
\bel{a.1}
\sup_{|x|\leq B_n} \Big( E[h^2_{nx}(V)]
+ \|h_{nx}\|_{\infty} \Big)
= O(n/\log n)
\ee
and, for positive numbers $\kappa_1$ and $\kappa_2$,
\bel{a.2}
\|h_{ny}-h_{nx}\|_{\infty} \leq |y-x|^{\kappa_1} O(n^{\kappa_2}),
\quad |x|,|y|\leq B_n, \quad |y-x| \leq 1.
\ee
Then
\bel{h1}
\sup_{|x|\leq B_n} |H_n(x)| = O_p(1).
\ee
If we strengthen \refeq{a.1} to
\bel{a.1a}
\sup_{|x|\leq B_n} \Big( E[h^2_{nx}(V)]
+ \|h_{nx}\|_{\infty} \Big)
= o(n/\log n),
\ee
then
\bel{h2}
\sup_{|x|\leq B_n} |H_n(x)| = o_p(1).
\ee
\end{lemma}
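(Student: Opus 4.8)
The plan is to establish \refeq{h1} and \refeq{h2} by a standard chaining-plus-union-bound argument, discretizing the index set $[-B_n,B_n]$ on a fine grid and controlling (i) the maximum of $|H_n|$ over the grid by a Bernstein (exponential) inequality together with a union bound, and (ii) the oscillation of $H_n$ between grid points by the Lipschitz-type bound \refeq{a.2}. First I would cover $[-B_n,B_n]$ by points $x_1,\dots,x_{N_n}$ with spacing $\delta_n$, where $N_n \leq 2B_n/\delta_n + 1$; since $B_n = O(n^\alpha)$, if we choose $\delta_n$ polynomially small (say $\delta_n = n^{-q}$ for a suitable $q$), then $N_n = O(n^{\alpha+q})$, so $\log N_n = O(\log n)$. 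This is the reason the assumptions are phrased with the $n/\log n$ rate: it is exactly what a union bound over polynomially many points can absorb.

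For the grid maximum, fix a grid point $x$ and write $H_n(x) = \avj \big(h_{nx}(V_j) - E[h_{nx}(V)]\big)$, a normalized sum of i.i.d.\ centered bounded random variables with variance at most $E[h_{nx}^2(V)] =: \sigma_{n,x}^2$ and sup-norm at most $\|h_{nx}\|_\infty =: M_{n,x}$. By Bernstein's inequality,
\[
P\big(|H_n(x)| > \lambda\big)
\leq 2\exp\!\Big( -\frac{n\lambda^2/2}{\sigma_{n,x}^2 + M_{n,x}\lambda/3} \Big).
\]
Under \refeq{a.1}, $\sigma_{n,x}^2 + M_{n,x} = O(n/\log n)$ uniformly over the grid, so taking $\lambda = \lambda_n$ a large constant (for \refeq{h1}) or $\lambda_n \to 0$ slowly (for \refeq{h2}), the exponent is of order $-\lambda_n^2 \log n$ up to constants; choosing the constant large enough (resp.\ letting $\lambda_n$ decay slowly enough that $\lambda_n^2 \log n \to \infty$) makes $N_n \cdot P(|H_n(x)|>\lambda_n) \to 0$. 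Under the strengthened \refeq{a.1a} the same computation gives the stronger conclusion, since now $\sigma_{n,x}^2 + M_{n,x} = o(n/\log n)$ and any fixed $\lambda>0$ already forces the union bound to vanish, whence one can let $\lambda_n\to 0$.

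For the oscillation term, any $x\in[-B_n,B_n]$ lies within $\delta_n$ of some grid point $x_i$, and then
\[
|H_n(x) - H_n(x_i)|
\leq \|h_{nx} - h_{nx_i}\|_\infty + E\big[|h_{nx}(V) - h_{nx_i}(V)|\big]
\leq 2\,\delta_n^{\kappa_1} O(n^{\kappa_2})
\]
by \refeq{a.2}, provided $\delta_n \leq 1$. Choosing $q$ large enough that $q\kappa_1 > \kappa_2$ (e.g.\ $q = (\kappa_2+1)/\kappa_1$) makes this bound $o(1)$ uniformly, which is negligible for both \refeq{h1} and \refeq{h2}; and $q$ can simultaneously be taken large, which only helps the $\log N_n = O(\log n)$ bound. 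Combining the two pieces via $\sup_{|x|\leq B_n}|H_n(x)| \leq \max_{1\leq i\leq N_n}|H_n(x_i)| + \sup \text{(oscillation)}$ yields the claim. The only mildly delicate point is bookkeeping the choice of $\delta_n$ (equivalently $q$) so that it is small enough for the Lipschitz term yet keeps $N_n$ polynomial; since $B_n$ grows only polynomially this is always possible, so there is no real obstacle—this lemma is a routine maximal inequality, and the substance of the paper lies in verifying \refeq{a.1}/\refeq{a.1a} and \refeq{a.2} for the specific processes arising in the proof of Proposition \ref{prop:2}.
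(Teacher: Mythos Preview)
Your proposal is correct and follows essentially the same route as the paper: a Bernstein/Hoeffding exponential inequality applied pointwise, a union bound over a polynomial-in-$n$ grid on $[-B_n,B_n]$, and control of the oscillation between grid points via \refeq{a.2}. The paper uses grid points $x_{nk}=-B_n+2kB_n n^{-m}$ with $m>\alpha+\kappa_2/\kappa_1$ and cites the inequality as Hoeffding's, but the structure and bookkeeping are the same as yours.
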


\begin{proof}
To prove the lemma we use an inequality of Hoeffding (1963):
If $\xi_1,\dots,\xi_n$ are independent random variables
that have mean zero and variance $\sigma^2$
and are bounded by $M$, then for $\eta > 0$,
\[
P \Big( \Big| \avj \xi_j \Big| \geq \eta \Big)
\leq 2 \exp \Big( -\frac{ n\eta^2}
{2\sigma^2 + (2/3) M \eta} \Big).
\]
Applying this inequality with $\xi_j = h_{nx}(V_j)- E[h_{nx}(V)]$,
we obtain for $\eta > 0$:
\[
P( |H_n(x)| \geq \eta)
\leq 2 \exp \Big( - \frac{n\eta^2}{2 E[h^2_{nx}(V)]
+ 2 \eta \| h_{nx} \|_\infty} \Big).
\]
Thus there is a positive number $a$
such that for all $\eta > 0$,
\[
\sup_{|x| \leq B_n} P( |H_n(x)|\geq \eta)
\leq 2 \exp \Big( - \frac{\eta^2}{1\vee \eta} a \log n \Big).
\]
Now let $x_{nk} = - B_n + 2kB_n n^{-m}$ for $k = 0,1,\dots,n^m$,
with $m$ an integer greater than $\alpha + \kappa_2/\kappa_1$.
The above yields for large enough $ \eta > 0$,
\[
P \Big( \max_{k=0,\dots,n^m}|H_n(x_{nk})| > \eta \Big)
\leq \sum_{k=0}^{n^m} P(|H_n(x_{nk})| > \eta) = o(1).
\]
Now, using \refeq{a.2},
\be*
\sup_{|x|\leq B_n} |H_n(x)|
 & \leq & \max_{k=0,\dots,n^m} \Big( |H_n(x_{nk})|
+ \sup_{|x-x_{nk}|\leq B_n n^{-m}} |H_n(x)-H_n(x_{nk})| \Big) \\
 & = &  O_p(1) + O(B_n^{\kappa_1} n^{-m \kappa_1} n^{\kappa_2}) = O_p(1).
\no*
This is the desired result \refeq{h1}.
The second conclusion is an immediate consequence.
\end{proof}

Next we consider the degenerate U-process
\[
U_n(x) =  n^{-m/2} \sum_{(i_1,\dots,i_m)\in I_m^n}
u_{nx}(V_{i_1},\dots,V_{i_m}), \quad x \in \R,
\]
with $I_m^n = \{ (i_1,\dots,i_m): 1 \leq i_j \leq n, i_j \neq i_k
\mbox{ if } j \neq k\}$,
and $u_{nx}$ a bounded measurable function from $S^m$ to $\R$
such that for all $v_1,\dots,v_m$ in $S$,
\[
E[u_{nx}(V_1,v_2,\dots,v_m)] =
\dots = E[u_{nx}(v_1,v_2,\dots,V_m)] = 0.
\]
Set $\|u_{nx}\|_2 = (E [u_{nx}^2(V_1,\dots,V_m)])^{1/2}$.

\begin{lemma}\label{lm:aux.2}
Let $B_n$ be positive numbers such that $B_n = O(n^\alpha)$
for some $\alpha >0 $. Suppose that
\bel{u.1}
\sup_{|x|\leq B_n} \Big( \|u_{nx}\|_2^{2/m}
+ \| u_{nx}\|_{\infty}^{2/(m+1)} n^{-1/(m+1)} \Big) = O((\log n)^{-1})
\ee
and, for some positive $\kappa_1$ and $\kappa_2$,
\bel{u.2}
\| u_{ny}-u_{nx}\|_{\infty} \leq |y-x|^{\kappa_1} O(n^{\kappa_2}),
\quad |x|,|y| \leq B_n, \quad |y-x| \leq 1.
\ee
Then
\bel{U1}
\sup_{|x|\leq B_n} |U_n(x)| = O_p(1).
\ee
If we strengthen \refeq{u.1} to
\bel{u.1a}
\sup_{|x|\leq B_n} \Big( \|u_{nx}\|_2^{2/m}
+ \| u_{nx}\|_{\infty}^{2/(m+1)} n^{-1/(m+1)} \Big) = o((\log n)^{-1}),
\ee
then
\bel{U2}
\sup_{|x|\leq B_n} |U_n(x)| = o_p(1).
\ee
\end{lemma}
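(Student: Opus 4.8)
The plan is to mimic the discretization argument used for Lemma \ref{lm:aux.1}, but replacing Hoeffding's inequality for sums of bounded independent variables by an exponential (or moment) inequality for degenerate $U$-statistics. First I would fix $x$ and control the tail of the single random variable $U_n(x)$. By the standard theory of degenerate $U$-statistics of order $m$ (for instance the exponential inequalities of Arcones and Gin\'e (1993), or the moment bounds in de la Pe\~na and Gin\'e (1999)), one has for $\eta>0$ a bound of the form
\[
P\big(|U_n(x)|\geq \eta\big) \leq C_m \exp\!\Big( - c_m \min\Big\{ \big(\eta/\|u_{nx}\|_2\big)^{2/m},\ \big(n^{1/2}\eta/\|u_{nx}\|_\infty\big)^{2/(m+1)} \Big\} \Big);
\]
the two regimes in the exponent are exactly the reason the assumption \refeq{u.1} involves the two quantities $\|u_{nx}\|_2^{2/m}$ and $\|u_{nx}\|_\infty^{2/(m+1)} n^{-1/(m+1)}$. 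Substituting the bound \refeq{u.1}, namely that each of these is $O((\log n)^{-1})$, gives uniformly over $|x|\leq B_n$ a tail bound of the shape $P(|U_n(x)|\geq \eta) \leq C_m \exp(-c\,\phi(\eta)\log n)$ where $\phi(\eta)\to\infty$ as $\eta\to\infty$ (more precisely $\phi(\eta)$ grows like a positive power of $\eta$), so that for $\eta$ large enough the bound is $o(n^{-m-1})$, say.

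Next I would run the same chaining-by-a-fine-grid argument as in Lemma \ref{lm:aux.1}. Put $x_{nk}=-B_n+2kB_nn^{-\ell}$ for $k=0,\dots,n^\ell$, with $\ell$ an integer chosen larger than $\alpha+\kappa_2/\kappa_1$ plus a bit more (to absorb the extra polynomial factor $n^{m+1}$ needed in the union bound). Then
\[
P\Big( \max_{k=0,\dots,n^\ell} |U_n(x_{nk})| > \eta \Big) \leq \sum_{k=0}^{n^\ell} P\big(|U_n(x_{nk})| > \eta\big) = o(1)
\]
for $\eta$ large. To pass from the grid to the supremum I would use the Lipschitz-type bound \refeq{u.2}: on each cell $|x-x_{nk}|\leq B_nn^{-\ell}$ we have
\[
|U_n(x)-U_n(x_{nk})| \leq n^{-m/2}\,|I_m^n|\,\|u_{nx}-u_{nx_{nk}}\|_\infty \leq n^{m/2}\,|x-x_{nk}|^{\kappa_1}O(n^{\kappa_2}) = O\big(n^{m/2+\kappa_2}B_n^{\kappa_1}n^{-\ell\kappa_1}\big),
\]
which tends to zero by the choice of $\ell$; here I crudely bound the degenerate $U$-statistic by the number of summands $|I_m^n|\leq n^m$ times the sup-norm of the kernel difference, which suffices since the oscillation term is deterministic and we only need it to be $o_p(1)$ (in fact $O(1)$). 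Combining, $\sup_{|x|\leq B_n}|U_n(x)| \leq \max_k |U_n(x_{nk})| + o(1) = O_p(1)$, which is \refeq{U1}. The second conclusion \refeq{U2} follows verbatim: under the strengthened hypothesis \refeq{u.1a}, for \emph{every} fixed $\eta>0$ the same computation gives $P(\sup_{|x|\leq B_n}|U_n(x)|>\eta)\to 0$, because now $\phi(\eta)\log n\to\infty$ for each $\eta>0$ once the $o((\log n)^{-1})$ rate is plugged in.

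The main obstacle is the first step: locating and correctly invoking an exponential tail inequality for degenerate $U$-processes in which the \emph{two} scaling parameters $\|u_{nx}\|_2$ and $\|u_{nx}\|_\infty$ appear with exactly the exponents $2/m$ and $2/(m+1)$ demanded by \refeq{u.1}. One has to be slightly careful that the inequality is stated for \emph{completely} degenerate kernels (which is guaranteed by the hypothesis that all one-variable conditional expectations vanish) and that the constants $C_m,c_m$ depend only on $m$, so that the bound is genuinely uniform over $|x|\leq B_n$; once that inequality is in hand, everything else is the routine grid argument already carried out in the proof of Lemma \ref{lm:aux.1}.
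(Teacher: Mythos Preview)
Your proposal is correct and follows essentially the same route as the paper: invoke the Arcones--Gin\'e exponential inequality for degenerate $U$-statistics (the paper cites inequality (c) in Proposition~2.3 of Arcones and Gin\'e, 1993, stated with $\|u_{nx}\|_2^{2/m}+(\|u_{nx}\|_\infty\eta^{1/m}n^{-1/2})^{2/(m+1)}$ in the denominator, which is equivalent up to constants to your $\min$-form), then repeat the grid-plus-Lipschitz argument from Lemma~\ref{lm:aux.1}. The only quibble is that the extra polynomial factor $n^{m/2}$ you must absorb comes from the deterministic oscillation bound $|U_n(x)-U_n(x_{nk})|\le n^{m/2}\|u_{nx}-u_{nx_{nk}}\|_\infty$, not from the union bound (the latter is handled, as in Lemma~\ref{lm:aux.1}, simply by choosing $\eta$ large).
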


\begin{proof}
We use a similar argument as for Lemma \ref{lm:aux.1}, but rely now on the
Arcones--Gin\'e exponential inequality for degenerate U-processes
(inequality (c) in Proposition 2.3 of Arcones and Gin\'e, 1994).
This inequality states that there
are constants $c_1$ and $c_2$ depending only on $m$ such that,
for every $\eta>0$, all $x$ and all $n$,
\[
P(|U_{n}(x)|>\eta)
\leq c_1 \exp \Big( - \frac{c_2 \eta^{2/m}}{\|u_{nx}\|_2^{2/m}
+ (\|u_{nx}\|_\infty \eta^{1/m} n^{-1/2})^{2/(m+1)}} \Big).
\]
{From} this inequality one obtains as in the proof of Lemma \ref{lm:aux.1} that
there is a positive number $b$ such that
\[
\sup_{|x|\leq B_n} P(|U_n(x)| > \eta)
\leq c_1 \exp \Big( - \frac{\eta^{2/m}}
{(1 \vee \eta)^{2/(m+m^2)}} b \log n \Big), \quad \eta > 0.
\]
Now proceed as in the proof of Lemma \ref{lm:aux.1}.
\end{proof}

\section{Properties of local polynomial smoothers}

For an introduction to local polynomial smoothers we refer to
Fan and Gijbels (1996).
In this section we derive some properties of local polynomial
smoothers $\hr$ of order $d$, defined by
$\hr(x) = \beta_0(x)$ for $x \in [0,1]$, where
$\beta(x) = (\beta_0(x),\dots,\beta_d(x))^\top$ is the minimizer of
\[
\sum_{j=1}^n \Big( Y_j - \sum_{m=0}^d \beta_m
\Big( \frac{Z_j-x}{c_n} \Big)^m \Big)^2
\frac{1}{c_n} w \Big( \frac{Z_j-x}{c_n} \Big).
\]
Here we have re-scaled $\beta_1,\dots,\beta_d$ for convenience.
The normal equations are
\[
Q_n(x) \beta = \avj w_n(Z_j-x) Y_j,
\]
where the vector $w_n(x) = (w_{n0}(x),\dots,w_{nd}(x))^\top$
has entries
\[
w_{nm}(x) = \frac{x^m}{c_n^{m+1}} \, w\Big( \frac{x}{c_n} \Big),
\]
and the matrix $Q_n(x)$ has entries $q_{n,k+m}(x)$, $k,m=0,\dots,d$,
with
\[
q_{nm}(x) = \avj w_{nm}(Z_j-x).
\]
By the properties of the kernel $w$ and the covariate density $g$ 
we have for $m=0,\dots,2d$ and all $x \in \R$,
\ben
\label{w1}
 |w_{nm}(x)| & \leq & \|w\|_{\infty} c_n^{-1}, \\
\label{w2}
|w'_{nm}(x)| & \leq &  (\|w'\|_{\infty} + m \|w\|_{\infty}) c_n^{-2}, \\
\label{w3}
E[w_{nm}^2(Z-x)] & \leq & \|w\|_{\infty} \|g\|_{\infty} c_n^{-1}.
\non
Write $p_n(x) = (p_{n0}(x),\dots,p_{nd}(x))^\top$ for the first column
of the inverse $Q_n(x)^{-1}$ of $Q_n(x)$, and
\[
A_n(x,y) = p_n(x)^\top w_n(y-x).
\]
{From} the normal equations we obtain
\[
\hr(x) = \beta_0(x) = \avj A_n(x,Z_j) Y_j.
\]
For the expectation of $q_{nm}(x)$ we write
\[
\overline q_{nm}(x) = E[q_{nm}(x)] = \int g(x+c_n t) t^m w(t)\,dt.
\]
We define $\bar Q_n(x)$ correspondingly, replacing $q_{nm}(x)$
by $\overline q_{nm}(x)$.
Furthermore, $\overline p_n$ and $\bar A_n$
are defined as $p_n$ and $A_n$, with $Q_n$ replaced by $\bar Q_n$.

For a unit vector $v=(v_0,\dots,v_d)^\top$ and $0\leq x\leq 1$ we have
\[
v^\top\bar Q_n(x)v = \int \Big( \sum_{i=0}^d v_i t^i \Big)^2 g(x+c_n t)w(t)\,dt.
\]
Thus, by Assumption \ref{ass:G},
\[
(d+1) \|g\|_\infty \geq v^\top\bar Q_n(x)v
\geq \infx g(x) \int_{-1\vee(-x/c_n)}^{1\wedge((1-x)/c_n)}
\Big( \sum_{i=0}^d v_i t^i \Big)^2 w(t)\,dt.
\]
By Assumption \ref{ass:W}, there is an $\eta>0$ such that the eigenvalues
of $\bar Q_n(x)$ are in the interval $[\eta,(d+1)\|g\|_\infty]$ 
for all $x$ and $n$.
Thus $\bar Q_n(x)$ is invertible, and
\bel{star3}
\supx \|\bar Q_n^{-1}(x)\| \leq 1/\eta.
\ee

\begin{lemma}\label{lm:l1}
Suppose Assumptions \ref{ass:G} and \ref{ass:W} hold.
Let $c_n\to 0$ and $c_n^{-1}=O(n/\log n)$.
Then
\[
\supx \Big| q_{nm}(x) - \overline q_{nm}(x) \Big|
= O_p((nc_n/\log n)^{-1/2}), \quad m=0,\dots,2d,
\]
and consequently
\[
\supx \| Q_n(x)-\bar Q_n(x) \| = O_p((nc_n/\log n)^{-1/2}).
\]
\end{lemma}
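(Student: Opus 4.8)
The plan is to read each centered quantity $q_{nm}(x)-\overline q_{nm}(x)$, $m=0,\dots,2d$, as an instance of the process $H_n$ of Lemma~\ref{lm:aux.1}, after a deterministic rescaling, and then to verify the hypotheses \refeq{a.1} and \refeq{a.2} of that lemma. Set $\lambda_n=(nc_n/\log n)^{1/2}$, fix $m$, and let $h_{nx}(z)=\lambda_n w_{nm}(z-x)$ for $z\in\R$. Since $q_{nm}(x)=\avj w_{nm}(Z_j-x)$ and $\overline q_{nm}(x)=E[w_{nm}(Z-x)]$, we have $\lambda_n\big(q_{nm}(x)-\overline q_{nm}(x)\big)=H_n(x)$ with the $V_j$ being the $Z_j$. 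I would take $B_n\equiv 1$, which is $O(n^\alpha)$ for any $\alpha>0$ and whose interval $[-1,1]$ contains $[0,1]$; then conclusion \refeq{h1} of Lemma~\ref{lm:aux.1} reads $\supx\lambda_n|q_{nm}(x)-\overline q_{nm}(x)|=O_p(1)$, which is the first display.

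To check \refeq{a.1}, use the bounds \refeq{w1} and \refeq{w3}. From \refeq{w1}, $\|h_{nx}\|_\infty\le\lambda_n\|w\|_\infty c_n^{-1}=\|w\|_\infty (n/\log n)^{1/2}c_n^{-1/2}$, which is $O(n/\log n)$ precisely because the hypothesis $c_n^{-1}=O(n/\log n)$ gives $c_n^{-1/2}=O((n/\log n)^{1/2})$. From \refeq{w3}, $E[h_{nx}^2(Z)]=\lambda_n^2 E[w_{nm}^2(Z-x)]\le \|w\|_\infty\|g\|_\infty (nc_n/\log n)c_n^{-1}=\|w\|_\infty\|g\|_\infty (n/\log n)$. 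Both estimates are uniform in $x$, so \refeq{a.1} holds. For \refeq{a.2}, the mean value theorem together with \refeq{w2} gives $\|h_{ny}-h_{nx}\|_\infty\le\lambda_n\|w_{nm}'\|_\infty|y-x|\le(\|w'\|_\infty+m\|w\|_\infty)\lambda_n c_n^{-2}|y-x|$; since $c_n^{-1}=O(n/\log n)$ we have $\lambda_n c_n^{-2}=(n/\log n)^{1/2}c_n^{-3/2}=O(n^2)$, so \refeq{a.2} holds with $\kappa_1=1$, $\kappa_2=2$. Lemma~\ref{lm:aux.1} then yields the first assertion.

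The matrix statement follows at once. The matrix $Q_n(x)-\bar Q_n(x)$ is a fixed $(d+1)\times(d+1)$ matrix whose $(k,m)$ entry equals $q_{n,k+m}(x)-\overline q_{n,k+m}(x)$, so $\|Q_n(x)-\bar Q_n(x)\|$ is bounded, uniformly in $x$, by a constant depending only on $d$ (and the chosen norm) times $\max_{0\le\ell\le 2d}|q_{n\ell}(x)-\overline q_{n\ell}(x)|$; taking $\supx$ and applying the first part for each $\ell\in\{0,\dots,2d\}$ gives $\supx\|Q_n(x)-\bar Q_n(x)\|=O_p((nc_n/\log n)^{-1/2})$. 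I do not expect a genuine obstacle here: the only delicate point is the choice of the scaling $\lambda_n$, which is pinned down by requiring the variance term in \refeq{a.1} to be of exact order $n/\log n$, and it is then precisely the hypothesis $c_n^{-1}=O(n/\log n)$ that keeps the sup-norm term in \refeq{a.1} of the same order, so the assumption on $c_n$ is exactly what is needed.
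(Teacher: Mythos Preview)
Your proof is correct and follows essentially the same route as the paper: apply Lemma~\ref{lm:aux.1} with $B_n=1$, $V_j=Z_j$, and $h_{nx}(v)=(nc_n/\log n)^{1/2}w_{nm}(v-x)$, verifying \refeq{a.1} and \refeq{a.2} from \refeq{w1}--\refeq{w3}. The only cosmetic difference is that the paper records $\kappa_1=\kappa_2=1$ in \refeq{a.2}, whereas your bound $\lambda_n c_n^{-2}=O(n^2)$ leads to $\kappa_2=2$; your value is the one justified under the general hypothesis $c_n^{-1}=O(n/\log n)$, and in any case the specific polynomial exponent is irrelevant for the chaining argument in Lemma~\ref{lm:aux.1}.
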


\begin{proof}
Fix $m$ and use Lemma \ref{lm:aux.1} with $B_n=1$, $V_j=Z_j$ and
\[
h_{nx}(v)= (nc_n/\log n)^{1/2} w_{nm}(v-x).
\]
For these choices,
the conditions \refeq{a.1} and \refeq{a.2}, with $\kappa_1 = \kappa_2 = 1$,
follow from \refeq{w1} to \refeq{w3}.
\end{proof}

\begin{lemma}\label{lm:l2}
Suppose Assumptions \ref{ass:G} and \ref{ass:W} hold.
Assume also that $f$ has mean zero and finite moment of order $\beta>2$.
Let $c_n\to 0$ and $c_n^{-1}n^{2/\beta}=O(n/\log n)$.
Then
\[
\supx \Big| \avj w_{nm}(Z_j-x) \e_{j} \Big|
= O_p((nc_n/\log n)^{-1/2}), \quad m=0,\dots,2d.
\]
\end{lemma}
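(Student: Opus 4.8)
The plan is to reduce the assertion to Lemma~\ref{lm:aux.1} by truncating the errors at level $M_n=n^{1/\beta}$. Since $f$ has a finite moment of order $\beta$, the first part of Lemma~\ref{lm:3m} gives $\maxi|\e_i|=o_p(n^{1/\beta})$, so the event $\{\maxi|\e_i|\le M_n\}$ has probability tending to one, and on this event we have, for every $x$,
\[
\avj w_{nm}(Z_j-x)\e_j=\avj w_{nm}(Z_j-x)\e_j\ind{|\e_j|\le M_n}.
\]
It therefore suffices to bound the truncated average uniformly over $x\in[0,1]$, and I would do this by splitting it into a deterministic bias and a centered part.

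First I would handle the bias. Because $Z$ and $\e$ are independent, the mean of the truncated summand factors as
\[
E\big[w_{nm}(Z-x)\e\ind{|\e|\le M_n}\big]=\overline q_{nm}(x)\,E\big[\e\ind{|\e|\le M_n}\big].
\]
Here $\overline q_{nm}(x)=\int g(x+c_nt)t^mw(t)\,dt$ is bounded in $x$ by $\|g\|_\infty\int|t|^mw(t)\,dt$, while the second part of Lemma~\ref{lm:3m}, which uses the mean-zero assumption on $f$, gives $|E[\e\ind{|\e|\le M_n}]|=o(M_n^{1-\beta})=o(n^{-(\beta-1)/\beta})$. Since $\beta>2$ forces $(\beta-1)/\beta>1/2$, and since $nc_n/\log n\le n$ for all large $n$, this bias term is $o(n^{-1/2})=o((nc_n/\log n)^{-1/2})$, hence negligible for the claim.

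It then remains to control the centered average
\[
T_n(x)=\avj\Big\{w_{nm}(Z_j-x)\e_j\ind{|\e_j|\le M_n}-E\big[w_{nm}(Z-x)\e\ind{|\e|\le M_n}\big]\Big\},
\]
for which I would apply Lemma~\ref{lm:aux.1} with $B_n=1$, $V_j=(Z_j,\e_j)$ and $h_{nx}(z,e)=(nc_n/\log n)^{1/2}w_{nm}(z-x)e\ind{|e|\le M_n}$; the process of that lemma is then $(nc_n/\log n)^{1/2}T_n$, so a bound of $O_p(1)$ for it yields $\supx|T_n(x)|=O_p((nc_n/\log n)^{-1/2})$. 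The Lipschitz-type condition \refeq{a.2} (with $\kappa_1=1$) follows from the mean value theorem and the bound $|w'_{nm}|\le(\|w'\|_\infty+m\|w\|_\infty)c_n^{-2}$ of \refeq{w2}, since $c_n^{-1}$ is polynomially bounded. For \refeq{a.1}, the second-moment part is handled by $E[h_{nx}^2(V)]\le(n/\log n)\,\|w\|_\infty\|g\|_\infty\,E[\e^2]=O(n/\log n)$ via \refeq{w3} and the finiteness of $E[\e^2]$, and the sup-norm part by $\|h_{nx}\|_\infty\le\|w\|_\infty\,(n/(c_n\log n))^{1/2}M_n$ via \refeq{w1}, which is $O(n/\log n)$ precisely because $M_n^2=n^{2/\beta}$ and $c_n^{-1}n^{2/\beta}=O(n/\log n)$ by hypothesis. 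Adding the three contributions gives the assertion.

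The main obstacle is the choice of truncation level: $M_n$ must be at least of order $n^{1/\beta}$ so that the discarded tail is zero with probability tending to one (Lemma~\ref{lm:3m}), yet at most of order $(nc_n/\log n)^{1/2}$ so that the sup-norm bound in \refeq{a.1} holds; these requirements are compatible exactly because the bandwidth condition $c_n^{-1}n^{2/\beta}=O(n/\log n)$ is equivalent to $n^{1/\beta}=O((nc_n/\log n)^{1/2})$. Everything else is routine estimation with \refeq{w1}--\refeq{w3}.
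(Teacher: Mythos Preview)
Your proof is correct and follows essentially the same route as the paper's: truncate the errors at $n^{1/\beta}$ via Lemma~\ref{lm:3m}, dispose of the resulting bias, and apply Lemma~\ref{lm:aux.1} with $h_{nx}(z,e)=(nc_n/\log n)^{1/2}w_{nm}(z-x)e\ind{|e|\le n^{1/\beta}}$, checking \refeq{a.1} and \refeq{a.2} exactly as you do. The only cosmetic difference is that the paper centers the truncated errors first (writing $\e_{nj}=\e_j\ind{|\e_j|\le n^{1/\beta}}-E[\e\ind{|\e|\le n^{1/\beta}}]$) and then bounds the leftover $q_{nm}(x)E[\e\ind{|\e|\le n^{1/\beta}}]$ using Lemma~\ref{lm:l1}, whereas you center $h_{nx}$ inside Lemma~\ref{lm:aux.1} and bound the bias through the deterministic $\bar q_{nm}$ directly---a slightly cleaner bookkeeping that avoids the appeal to Lemma~\ref{lm:l1}.
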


\begin{proof}
Fix $m$. In view of Lemmas \ref{lm:3m} and \ref{lm:l1}
it suffices to show that
\bel{star}
\supx \Big| \avj w_{nm}(Z_j-x)\e_{nj} \Big|
= O_p((nc_n/\log n)^{-1/2}),
\ee
where
$\e_{nj} = \e_j \ind{|\e_j| \leq n^{1/\beta}} - E[\e \ind{|\e| \leq n^{1/\beta}}]$.
Here we used the fact that
\[
(nc_n/\log n)^{1/2}E[\e\ind{|\e|\leq n^{1/\beta}}]
= O(n^{-1/2}c_n^{1/2}n^{1/\beta}) = o(1).
\]
But \refeq{star} follows from an application of Lemma \ref{lm:aux.1}
with $B_n=1$, $V_j=(Z_j,\e_j)$ and
\[
h_{nx}(Z_j,\e_j) = (nc_n/\log n)^{1/2} w_{nm}(Z_j-x) \e_{nj}.
\]
Indeed, the left-hand side of \refeq{a.1} is of order
$n/\log n +(nc_n/\log n)^{1/2}c_n^{-1}n^{1/\beta}$,
which is of order $n/\log n$ by the assumptions on $c_n$.
Relation \refeq{a.2} follows by the Lipschitz continuity of $w$.
\end{proof}

\begin{theorem}\label{thm:lls}
Suppose Assumptions \ref{ass:G} and \ref{ass:W} hold.
Assume also that $f$ has mean zero and finite moment of order $\beta>2$.
Let $c_n\to 0$ and $c_n^{-1}n^{2/\beta}=O(n/\log n)$.
Then
\bel{ae}
\supx \Big| \avj \bar A_n(x,Z_j)\e_j \Big|
= O_p((nc_n/ \log n)^{-1/2}) .
\ee
If, in addition,
$r$ is $\nu$-times continuously differentiable with $\nu\leq d$,
then
\bel{er}
\supx \Big| \hr(x) - r(x) - \avj \bar A_{n}(x,Z_j)\e_j \Big|
= O_p(\log n/(nc_n)) + o_p(c_n^{\nu}).
\ee
If $r$ has a Lipschitz continuous $d$-th derivative, then $o_p(c_n^{\nu})$
can be replaced by $O_p(c_n^{d+1})$.
\end{theorem}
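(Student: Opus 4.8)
The plan is to reduce \refeq{ae} and \refeq{er} to the uniform estimates already available: Lemma~\ref{lm:l1} (closeness of $Q_n$ to $\bar Q_n$), the bound \refeq{star3} on $\bar Q_n^{-1}$, and Lemma~\ref{lm:l2} (uniform control of the error‑weighted sums $\avj w_{nm}(Z_j-x)\e_j$). First I would collect a few uniform‑in‑$x$ facts about the weights. Since $c_n^{-1}n^{2/\beta}=O(n/\log n)$ forces $nc_n/\log n\to\infty$, Lemma~\ref{lm:l1} gives $\supx\|Q_n(x)-\bar Q_n(x)\|=O_p((nc_n/\log n)^{-1/2})=o_p(1)$; combined with $\supx\|\bar Q_n^{-1}(x)\|\le 1/\eta$ from \refeq{star3} and the resolvent identity $Q_n^{-1}-\bar Q_n^{-1}=-\bar Q_n^{-1}(Q_n-\bar Q_n)Q_n^{-1}$, a standard perturbation argument shows that, on an event of probability tending to one, $Q_n(x)$ is invertible for every $x\in[0,1]$, and $\supx\|Q_n^{-1}(x)\|=O_p(1)$, whence $\supx\|p_n(x)\|=O_p(1)$; applying the identity to the first column yields $p_n-\bar p_n=-\bar Q_n^{-1}(Q_n-\bar Q_n)p_n$ and so $\supx\|p_n(x)-\bar p_n(x)\|=O_p((nc_n/\log n)^{-1/2})$. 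Because $w\ge 0$ is supported on $[-1,1]$ we have $|w_{nm}(z)|\le c_n^{-1}w(z/c_n)$, so $\avj\|w_n(Z_j-x)\|\le(d+1)q_{n0}(x)$, and $\supx q_{n0}(x)=O_p(1)$ by Lemma~\ref{lm:l1} together with $\supx\overline q_{n0}(x)\le\|g\|_\infty$; therefore $\supx\avj|A_n(x,Z_j)|=O_p(1)$. Finally, the $k$-th entry of $\avj w_n(Z_j-x)((Z_j-x)/c_n)^m$ is $q_{n,k+m}(x)$, i.e.\ the $k$-th entry of $Q_n(x)e_m$ with $e_0,\dots,e_d$ the standard basis vectors of $\R^{d+1}$, so $\avj w_n(Z_j-x)((Z_j-x)/c_n)^m=Q_n(x)e_m$; since $Q_n$ is symmetric this gives the reproducing identity, valid on the invertibility event,
\[
\avj A_n(x,Z_j)\Big(\frac{Z_j-x}{c_n}\Big)^m = p_n(x)^\top Q_n(x)e_m = e_0^\top e_m = \ind{m=0}, \qquad m=0,\dots,d.
\]

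For \refeq{ae} I would expand $\avj\bar A_n(x,Z_j)\e_j=\sum_{m=0}^d\bar p_{nm}(x)\,\avj w_{nm}(Z_j-x)\e_j$ and bound it, uniformly in $x$, by $(d+1)\bigl(\supx\|\bar p_n(x)\|\bigr)\max_{0\le m\le d}\supx|\avj w_{nm}(Z_j-x)\e_j|$. The first factor is at most $1/\eta$ by \refeq{star3} (a column of $\bar Q_n^{-1}$ has norm at most $\|\bar Q_n^{-1}\|$), and Lemma~\ref{lm:l2} gives $\supx|\avj w_{nm}(Z_j-x)\e_j|=O_p((nc_n/\log n)^{-1/2})$ for $m=0,\dots,d$; this is \refeq{ae}.

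For \refeq{er} I would split $\hr(x)=\avj A_n(x,Z_j)Y_j=\avj A_n(x,Z_j)r(Z_j)+\avj A_n(x,Z_j)\e_j$ and treat the two pieces separately. For the bias piece, Taylor's theorem gives $r(z)=\sum_{m=0}^{\nu}\frac{r^{(m)}(x)}{m!}(z-x)^m+\rho_x(z)$ with $\sup\{|\rho_x(z)|:x\in[0,1],\,|z-x|\le c_n\}\le\epsilon_n c_n^{\nu}$ and $\epsilon_n\to0$, since $r^{(\nu)}$ is uniformly continuous on $[0,1]$; as $\nu\le d$ the reproducing identity annihilates the polynomial part, so $\avj A_n(x,Z_j)r(Z_j)-r(x)=\avj A_n(x,Z_j)\rho_x(Z_j)$, and since $A_n(x,Z_j)=0$ whenever $|Z_j-x|>c_n$ this is at most $\epsilon_n c_n^{\nu}\supx\avj|A_n(x,Z_j)|=o_p(c_n^{\nu})$ uniformly. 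For the stochastic piece, $\avj A_n(x,Z_j)\e_j=\avj\bar A_n(x,Z_j)\e_j+(p_n(x)-\bar p_n(x))^\top\avj w_n(Z_j-x)\e_j$, and the last term is bounded uniformly by $\supx\|p_n(x)-\bar p_n(x)\|\cdot\sum_{m=0}^d\supx|\avj w_{nm}(Z_j-x)\e_j|=O_p((nc_n/\log n)^{-1/2})\cdot O_p((nc_n/\log n)^{-1/2})=O_p(\log n/(nc_n))$ by the preliminaries and Lemma~\ref{lm:l2}. Adding the two bounds gives \refeq{er}. When in addition $r^{(d)}$ is Lipschitz, I would carry out the bias step with $\nu=d$ and use the Lagrange form of the remainder: $|\rho_x(z)|\le (L/d!)|z-x|^{d+1}\le(L/d!)c_n^{d+1}$ for $|z-x|\le c_n$, so the bias piece becomes $O_p(c_n^{d+1})$ and replaces $o_p(c_n^{d})$.

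I expect the load‑bearing step to be the uniform invertibility of $Q_n(x)$ with $\supx\|Q_n^{-1}(x)\|=O_p(1)$: it is what legitimizes the reproducing identity, supplies the bound $\supx\avj|A_n(x,Z_j)|=O_p(1)$ that controls the bias, and allows $A_n-\bar A_n$ to be written as a product of two $O_p((nc_n/\log n)^{-1/2})$ factors. Once that is in place, the remainder is careful bookkeeping of uniform bounds, the bandwidth hypothesis entering only to make $(nc_n/\log n)^{-1/2}=o(1)$ and to licence Lemma~\ref{lm:l2}.
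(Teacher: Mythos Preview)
Your proposal is correct and follows essentially the same route as the paper: \refeq{ae} via $\bar A_n(x,Z_j)=\overline p_n(x)^\top w_n(Z_j-x)$ together with \refeq{star3} and Lemma~\ref{lm:l2}; \refeq{er} via the decomposition $\hr=\tilde r + p_n^\top \avj w_n(\cdot-x)\e_j$, the reproducing identity to kill the Taylor polynomial in $\tilde r - r$, and the bound $\supx\|p_n-\overline p_n\|=O_p((nc_n/\log n)^{-1/2})$ from Lemma~\ref{lm:l1} and \refeq{star3} to turn the stochastic remainder into $O_p(\log n/(nc_n))$. The only cosmetic differences are that you spell out the resolvent/perturbation argument for the uniform invertibility of $Q_n(x)$ and derive the reproducing identity and the bound $\supx\avj|A_n(x,Z_j)|=O_p(1)$ more explicitly than the paper does.
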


\begin{proof}
Since
\[
\avj \bar A_n(x,Z_j) \e_j = \overline p_n(x)^\top \avj w_n(Z_j-x) \e_j,
\]
relation \refeq{ae} follows from \refeq{star3} and Lemma \ref{lm:l2}.
To prove \refeq{er}, write
\[
\hr(x) = \tilde r(x) + p_n(x)^\top \avj w_n(Z_j-x) \e_j
\]
with
\[
\tilde r(x) = \avj A_n(x,Z_j) r(Z_j).
\]
By Lemma \ref{lm:l1} and relation \refeq{star3},
\bel{cd}
\supx \|p_n(x) - \overline p_n(x)\| = O_p((nc_n/\log n)^{-1/2}).
\ee
In view of this and Lemma \ref{lm:l2}, assertion \refeq{er} follows
if we verify
\bel{cr}
\supx |\tilde r(x)-r(x)| = o_p(c_n^{\nu}).
\ee
By construction,
\[
\sum_{j=1}^n A_{n}(x,Z_j) = 1
\und
\sum_{j=1}^n A_{n}(x,Z_j) (x-Z_j)^{m} = 0, \quad m=1,\dots,d.
\]
Hence, if we assume that $r$ is $\nu$-times continuously differentiable with
$\nu\leq d$, we can write
\[
\tilde r(x) - r(x) = \avj A_n(x,Z_j)
\bigg( r(Z_j) - r(x) - \sum_{m=1}^{\nu} r^{(m)}(x)
 \frac{(Z_j-x)^m}{m!} \bigg)
\]
and obtain the bound
\[
|\tilde r(x) - r(x)| \leq
\avj |A_n(x,Z_j)|  \; \frac{c_n^{\nu}} {\nu !}
\sup _{z\in [0,1], |z-x| \leq c_n} |r^{(\nu)} (z)- r^{(\nu)}(x)|.
\]
By \refeq{cd}, Lemma \ref{lm:l1} and \refeq{star3},
\bel{Ai}
\supx \avj |A_n(x,Z_j)| = O_p(1).
\ee
The desired \refeq{cr} follows from this and the uniform continuity of
$r^{(\nu)}$ on $[0,1]$.
If the $d$-th derivative is Lipschitz, one readily sees that \refeq{cr}
holds with $o_p(c_n^{\nu})$ replaced by $O_p(c_n^{d+1})$.
\end{proof}

We conclude this section by pointing out an additional property
of $\bar A_n$.

\begin{lemma}\label{lm:Abar}
Suppose Assumptions \ref{ass:G} and \ref{ass:W} hold.
Let $c_n\to 0$ and $c_n^{-1}=O(n/\log n)$.
Then
\bel{Abar2}
\avi \Big(\avj \bar A_{n}(Z_j,Z_i)-1\Big)^2 = o_p(1).
\ee
\end{lemma}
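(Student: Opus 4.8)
The plan is to expand $\avj \bar A_n(Z_j,Z_i)-1$ into four pieces and to show that, after squaring and averaging over $i$, each contributes $o_p(1)$; only one piece, a deterministic bias, needs genuine work. Write $\bar A_n(z,y)=\overline p_n(z)^\top w_n(y-z)$. Since $w_n(u)=0$ for $|u|>c_n$ and each component of $w_n$ is bounded by $\|w\|_\infty c_n^{-1}$ on its support (because $w_{nm}(u)=(u/c_n)^m w(u/c_n)c_n^{-1}$ and $w$ lives on $[-1,1]$), relation \refeq{star3} gives $|\bar A_n(z,y)|\le C c_n^{-1}\ind{|z-y|\le c_n}$ for a finite constant $C$. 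Set $g_n(y)=E[\bar A_n(Z,y)]=\int_0^1\bar A_n(z,y)g(z)\,dz$; the integrand is supported on an interval of length at most $2c_n$ and bounded by $Cc_n^{-1}$, so $\sup_n\sup_y|g_n(y)|<\infty$. The decomposition is
\[
\begin{aligned}
\avj \bar A_n(Z_j,Z_i)-1={}&\tfrac1n\bar A_n(Z_i,Z_i)+\tfrac1n\sum_{j\ne i}\big(\bar A_n(Z_j,Z_i)-g_n(Z_i)\big)\\
&{}+\big(g_n(Z_i)-1\big)-\tfrac1n g_n(Z_i).
\end{aligned}
\]

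The two $O(1/n)$ terms are handled by crude bounds. Since $w_n(0)=(w(0)/c_n,0,\dots,0)^\top$ and $\|\overline p_n(x)\|\le1/\eta$ on $[0,1]$ by \refeq{star3}, we get $\maxi|\bar A_n(Z_i,Z_i)|\le\|w\|_\infty/(\eta c_n)$, hence $\avi(\tfrac1n\bar A_n(Z_i,Z_i))^2=O((nc_n)^{-2})$; similarly $\avi(\tfrac1n g_n(Z_i))^2=O(n^{-2})$. Both are $o(1)$, because $c_n\to0$ together with $c_n^{-1}=O(n/\log n)$ forces $nc_n\to\infty$. For the centered sum I would condition on $Z_i$: given $Z_i$, the summands $\bar A_n(Z_j,Z_i)-g_n(Z_i)$, $j\ne i$, are i.i.d.\ with mean zero and variance at most $v_n(Z_i):=\int_0^1\bar A_n(z,Z_i)^2g(z)\,dz$, so by exchangeability
\[
E\Big[\avi\Big(\tfrac1n\sum_{j\ne i}\big(\bar A_n(Z_j,Z_i)-g_n(Z_i)\big)\Big)^2\Big]\le\frac1n\,E[v_n(Z_1)].
\]
As $v_n(y)\le C^2c_n^{-2}\cdot 2c_n\|g\|_\infty=O(c_n^{-1})$ uniformly in $y$, the right-hand side is $O((nc_n)^{-1})=o(1)$, and Markov's inequality disposes of this term.

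The main step is $\avi(g_n(Z_i)-1)^2=o_p(1)$. Since $E[\avi(g_n(Z_i)-1)^2]=\int_0^1(g_n(y)-1)^2g(y)\,dy$ and the $g_n$ are uniformly bounded, dominated convergence together with Markov's inequality reduces this to showing $g_n(y)\to1$ for every $y\in(0,1)$. Fix such a $y$; for $n$ large the support restriction permits the substitution $z=y-c_ns$, $s\in[-1,1]$, which turns the defining integral into
\[
g_n(y)=\int_{-1}^1 e_1^\top\bar Q_n(y-c_ns)^{-1}\big(w(s),sw(s),\dots,s^dw(s)\big)^\top g(y-c_ns)\,ds,
\]
where $e_1$ is the first standard basis vector. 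By the uniform continuity of $g$ on $[0,1]$, $\bar Q_n(y-c_ns)\to g(y)W$ and $g(y-c_ns)\to g(y)$, uniformly in $s\in[-1,1]$, with $W=\big(\int t^{k+m}w(t)\,dt\big)_{k,m=0}^d$; the matrix $g(y)W$ is positive definite, its smallest eigenvalue bounded below just as for $\bar Q_n(x)$ in the argument preceding \refeq{star3}. Hence $g_n(y)\to e_1^\top W^{-1}(\mu_0,\dots,\mu_d)^\top$ with $\mu_\ell=\int t^\ell w(t)\,dt$, and since $(\mu_0,\dots,\mu_d)^\top$ is precisely the first column of $W$, this limit equals $e_1^\top W^{-1}We_1=1$.

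The point to watch is exactly this bias term. The exact identity $\int_0^1\bar A_n(x,z)g(z)\,dz=1$ — which follows from the symmetry of $\bar Q_n$ together with $\sum_j A_n(x,Z_j)\equiv1$ — holds only when the \emph{first} argument of $\bar A_n$ is held fixed, whereas here the averaging is over the first argument; so there is no exact cancellation and one must pass through the limit $g_n(y)\to1$. This is also why the lemma asserts only $o_p(1)$: absent a modulus-of-continuity hypothesis on $g$, the convergence $\int_0^1(g_n(y)-1)^2g(y)\,dy\to0$ comes with no rate.
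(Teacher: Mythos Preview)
Your argument is correct, and it follows a genuinely different route from the paper's. The paper does not decompose into bias plus fluctuation; instead it establishes the \emph{uniform} statement
\[
\sup_{2c_n<x<1-2c_n}\Big|\avj \bar A_n(Z_j,x)-1\Big|=o_p(1)
\]
directly, by approximating $\overline p_n(Z_j)\approx g(x)^{-1}v$ (first column of $W^{-1}$) and, via Lemma~\ref{lm:l1}, $\avj w_n(x-Z_j)\approx g(x)u$ (first column of $W$), then using $v^\top u=1$; the boundary strip $[0,2c_n]\cup[1-2c_n,1]$ is handled separately through the crude bound \refeq{Abar1} together with $\avi\ind{Z_i\notin[2c_n,1-2c_n]}=o_p(1)$. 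Your approach trades this uniform control for a second-moment argument: the fluctuation term is dispatched by a conditional-variance bound (no exponential inequality needed), and the deterministic bias $g_n(y)-1$ is handled by pointwise convergence plus dominated convergence. This is more elementary in that it avoids the Hoeffding machinery of Section~3, and it makes transparent why no rate is available; the paper's version, on the other hand, yields the stronger intermediate sup-norm conclusion, which could be reused elsewhere. One small quibble: your parenthetical justification of $\int_0^1 \bar A_n(x,z)g(z)\,dz=1$ via ``$\sum_j A_n(x,Z_j)\equiv1$'' is not quite the right pointer; the identity follows immediately from $E[w_n(Z-x)]$ being the first column of $\bar Q_n(x)$ and $\overline p_n(x)=\bar Q_n(x)^{-1}e_1$. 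This does not affect the proof.
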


\begin{proof}
Since
\[
|\bar A_n(z,x)| \leq \sup_{0\leq y \leq 1}\|\overline p_n(y)\|(d+1)^{1/2}
\frac{1}{c_n} w\Big( \frac{x-z}{c_n} \Big),
\]
we obtain from Lemma \ref{lm:l1} and the properties of $\bar Q_n$ that
\bel{Abar1}
\supx \avj  |\bar A_{n}(x,Z_j)|  +
\supx \avj  |\bar A_{n}(Z_j,x)| = O_p(1).
\ee
Let $\Sigma$ be the $(d+1)\x (d+1)$ matrix
with $(i,j)$-entry given by $\int t^{i+j-2}k(t)\,dt$.
It follows from the uniform continuity of $g$ on $[0,1]$ that
\[
\sup_{c_n <x,y < 1-c_n, |x-y|\leq c_n} \|\bar Q_n(y)- g(x) \Sigma\| = o(1).
\]
This and Lemma \ref{lm:l1} yield
\[
\sup_{c_n <x< 1-c_n}
\Big| \avj w_{nm}(x-Z_j) - g(x) \int (-t)^m k(t)\,dt \Big| = o_p(1)
\]
for $m=0,\dots,2d$.
Let $u$ denote the first column of $\Sigma$ and $v$ be the first column of
$\Sigma^{-1}$. Then we have
\[
\sup_{c_n <x,y < 1-c_n, |x-y|\leq c_n}
\Big\| \overline p_n(y) - \dfrac{1}{g(x)}v \Big\| = o(1)
\]
and
\[
\sup_{c_n <x< 1-c_n} \Big\| \avj w_{n}(x-Z_j) - g(x) u \Big\| = o(1).
\]
Since $v^{\top} u=1$, we immediately obtain that
\[
\sup_{2c_n <x< 1-2c_n} \Big| \avj \overline p_n^{\top}(Z_j)w_n(x-Z_j)-1 \Big|
= o_p(1).
\]
The desired result follows from this, \refeq{Abar1}
and the fact that, by Assumption \ref{ass:G},
\[
\avi (\ind{ Z_i < 2c_n} + \ind{Z_i > 1-2c_n})= o_p(1).
\]
\end{proof}

\section{Proof of Proposition \ref{prop:2}}

Let $q_n = (nc_n/\log n)^{-1/2}$.
By choice of $c_n$ and $a_n$ we have $q_n=o(a_n)$.
It follows from Theorem \ref{thm:lls} that our local quadratic smoother
$\hat r$ satisfies
\[
\sup_{0\leq x \leq 1} |\hat r(x)- r(x)| = O_p(q_n).
\]
This and Lemma \ref{lm:3m} yield that the probability of the event
$\{ \maxj |\e_j|\geq n^{1/3} -a_n\}\cup\{\maxj |\hat \e_j|\geq n^{1/3} -a_n\}$
tends to zero.
On the complement of this event
we have $\hFs(t)=\F_{a_n}(t) =0$ for all $t <-n^{1/3}$ and
$\hFs(t)=\F_{a_n}(t)=1$ for all $t > n^{1/3}$.
Finally, $\sup_{|t|>n^{1/3}} f(t)=o(1)$ by the uniform continuity of $f$.
Combining the above and the fact that $\avj \e_j = O_p(n^{-1/2})$, we obtain
that
\[
\sup_{|t|>n^{1/3}} \Big| \hFs(t)-\F_{a_n}(t)-f(t) \; \avj \e_j \Big|
= o_p(n^{-1/2}).
\]
Thus we need to show that
\[
\suptn \Big| \hFs(t)- \F_{a_n}(t) - f(t)\; \avj \e_j \Big| = o_p(n^{-1/2}).
\]
For this, we first derive some preparatory results.
Let $\phi$ be a Lipschitz-continuous function with compact support
contained in $[-1,1]$, and $b_n$ a sequence of positive numbers
such that $b_n\to 0$ and $b_n^{-1}=O(n/\log n)$.
Then it follows from Lemma \ref{lm:aux.1} that
\bel{fe1}
\sup_{|x|\leq n^{1/3}}
\Big| \frac{1}{nb_n} \sum_{j=1}^n \phi\Big(\frac{x-\e_j}{b_n}\Big) -
\int f(x-b_nt)\phi(t)\,dt \Big|
= O_p( (\log n/(nb_n))^{1/2}).
\ee
Since $f$ is Lipschitz,
\bel{fe2}
\sup_{x\in \R}
\Big| \int f(x-b_nt)\phi(t)\,dt - f(x)\int \phi(t)\,dt \Big| = O(b_n).
\ee
It follows from \refeq{fe1} and \refeq{fe2}, with $\phi$ replaced by $|\phi|$,
that
\bel{fe3}
\sup_{|x|\leq n^{1/3}} \frac{1}{nb_n} \sum_{j=1}^n
\Big| \phi \Big( \frac{x-\e_j}{b_n} \Big) \Big| = O_p(1).
\ee
Next, let $\psi$ be the triangular density defined by
$\psi(x)= (1-|x|) \ind{|x|\leq 1}$.
Then we have for $x\in \R$ and $u\in \R$ with $|u|\leq a_n$ that
\[
\Big| \phi \Big( \frac{x-u}{a_n} \Big) \Big| \leq
\| \phi \|_{\infty} \ind{ |x| \leq 2a_n}
\leq 2\|\phi\|_\infty \psi\Big(\frac{x}{4a_n}\Big).
\]
This shows that for all $t\in\R$ and random variables
$\xi_{n,j}$ and $\zeta_{n,j}$ we have
\[
\Big| \avj \phi\Big( \frac{t-\e_j + \xi_{n,j}}{a_n} \Big)
\zeta_{n,j} \ind{|\xi_{n,j}| \leq a_n} \Big|
\leq 2\|\phi\|_\infty \avj \psi\Big( \frac{t-\e_j}{4a_n} \Big) |\zeta_{n,j}|.
\]
Thus if $\max_{1\leq j\leq n}|\xi_{n,j}| = o_p(a_n)$, we have
\bel{phipsi}
\suptn \Big| \dfrac{1}{na_n} \sum_{j=1}^n  
\phi\Big( \frac{t-\e_j + \xi_{n,j}}{a_n} \Big) \zeta_{n,j} \Big|
= O_p \Big( \max_{1\leq j\leq n}|\zeta_{n,j}| \Big).
\ee

We can write
\[
\hFs(t) = \avj K\Big(\frac{t-\hat \e_j}{a_n}\Big)
= \avj K\Big(\frac{t-\e_j + \hr(Z_j) - r(Z_j)}{a_n}\Big), \quad t\in \R.
\]
Choose $\gamma>4$ such that $\int|x|^\gamma f(x)\,dx<\infty$.
By Lemma \ref{lm:3m} we have
\bel{add}
P(\maxi |\e_i| > n^{1/\gamma}) \to 0
\quad\textrm{and}\quad
E[\e_1 \ind{|\e_1|\leq n^{1/\gamma}}]= o(n^{-1/2}).
\ee
Let $\e_{n,j}= \e_j \ind{|\e_j|\leq n^{1/\gamma}}-
E[\e_{j} \ind{|\e_j|\leq n^{1/\gamma}}]$.  Set
\[
\begin{aligned}
\delta_{n,i} & = \avj \bar A_n(Z_i,Z_j) \e_{n,j},
\quad i=1,\dots,n, \\
\bFs(t) & = 
\avj K\Big(\frac{t-\e_j + \delta_{n,j}}{a_n}\Big), \quad  t\in \R.
\end{aligned}
\]
Our next goal is to show that
\bel{wf1}
\suptn |\hFs(t)-\bFs(t)| = o_p(n^{-1/2}).
\ee
It follows from \refeq{star3} and the properties of $w$ that
\bel{ab}
|\bar A_{n}(x,z)| \leq C_n \ind{|x-z| \leq c_n},
\quad x,z\in [0,1],
\ee
where $C_n = O(c_n^{-1})$.
In view of (\ref{Abar1}) and (\ref{add}),
Theorem \ref{thm:lls} and Lemma \ref{lm:Abar} yield
\ben
\label{ddif}
\maxi |\hr(Z_i) - r(Z_i) - \delta_{n,i} | & = & o_p(n^{-1/2}), \\
\label{dmax}
\maxi |\delta_{n,i}| & =  & O_p(q_n), \\
\label{dave}
\avj \delta_{n,j} - \avj \e_j  & = & o_p(n^{-1/2}).
\non
With $\zeta_{n,j}=\hr(Z_j)-r(Z_j)-\delta_{n,j}$ we have
\[
\hFs(t)-\bFs(t) = \int_0^1 \dfrac{1}{na_n} \sum_{j=1}^n \zeta_{n,j}
k\Big(\frac{t-\e_j+\delta_{n,j}+s\zeta_{n,j}}{a_n}\Big)\,ds.
\]
Using \refeq{ddif}, \refeq{dmax}, and \refeq{phipsi} with $\phi=k$,
we obtain \refeq{wf1}.

A Taylor expansion shows that
\[
\bFs(t) - \F_{a_n}(t) = T_{n,1}(t) + \dfrac{1}{2}T_{n,2}(t) 
+ \dfrac{1}{2}R_n(t),
\]
where
\[
T_{n,1}(t) 
= \frac{1}{na_n} \sum_{j=1}^n k\Big(\frac{t-\e_j}{a_n}\Big) \delta_{n,j}, 
\qquad
T_{n,2}(t) 
= \frac{1}{na_n^2} \sum_{j=1}^n k'\Big( \frac{t-\e_j}{a_n}\Big)\delta_{n,j}^2,
\]
and
\[
R_n(t) = \int_0^1 \frac{1}{na_n^3}
\sum_{j=1}^n k''\Big( \frac{t-\e_j+s\delta_{n,j}}{a_n} \Big) \delta_{n,j}^3 (1-s)^2\,ds.
\]
By \refeq{phipsi} and \refeq{dmax},
\[
\suptn |R_n(t)| = O_p(a_n^{-2} q_n^3) = o_p(n^{-1/2}).
\]
For $t\in \R$, let now
\[
\begin{aligned}
S_{n,1}(t) & = \int f(t-a_nx) k(x)\,dx  \; \avj \delta_{n,j}, \\
S_{n,2}(t) & = a_n^{-1}\int f(t-a_nx) k'(x)\,dx \; \avj \delta^2_{n,j}.
\end{aligned}
\]
In view of the Lipschitz continuity of $f$, it follows from \refeq{dave} that
\[
\suptn \Big| S_{n,1}(t) - f(t) \; \avj \e_j \Big| = o_p(n^{-1/2}),
\]
and from $\int k'(x)\,dx=0$ that
\[
\suptn |S_{n,2}(t)| = O_p\Big( \maxi \delta_{n,i}^2 \Big)
= O_p(q_n^2) = o_p(n^{-1/2}).
\]
Thus the desired result will follow if we show that
\bel{wts}
\suptn |T_{n,\nu}(t)-S_{n,\nu}(t)| = o_p(n^{-1/2}), \quad \nu=1,2.
\ee
We shall demonstrate this for the case $\nu=2$. The case $\nu=1$ is similar,
yet simpler.

We can write
$n^{1/2} (T_{n,2}(t)-S_{n,2}(t))
= \sum_{i=1}^6 U_{n,i}(t)$, where
\[
\begin{aligned}
U_{n,1}(t) & =  n^{-3/2}
\sum_{(i,j,l)\in I_3^n} \phi_{n,t}(\e_{i})
n^{-1} a_n^{-2} \bar A_n(Z_{i},Z_{j}) \bar A_n(Z_{i},Z_{l})
\e_{n,j} \e_{n,l}, \\
U_{n,2}(t) & = n^{-1} \sum_{(i,j)\in I_2^n} \phi_{n,t}(\e_{i})
n^{-3/2} a_n^{-2}\bar A_n^2(Z_i,Z_j) (\e_{n,j}^2- E[\e_{n,1}^2]), \\
U_{n,3}(t) & = n^{-1} \sum_{(i,j)\in I_2^n} \phi_{n,t}(\e_{i})
n^{-3/2} a_n^{-2} 
(\bar A_n^2(Z_i,Z_j) - \bar A_{ni}^2) E[\e_{n,1}^2], 
\\
U_{n,4}(t) & = \avi \phi_{n,t}(\e_{i}) (n-1)n^{-3/2} a_n^{-2} 
\bar A_{ni}^2[\e_{n,1}^2], 
\\
U_{n,5}(t) & = \dfrac{2}{n}\sum_{i=1}^n n^{-1/2}a_n^{-2} \tilde\delta_{n,i}
\phi_{n,t}(\e_{i})\bar A_n(Z_{i},Z_{i})\e_{n,i}, \\
U_{n,6}(t) & = \avi n^{-3/2}\phi_{n,t}(\e_{i})\bar A_n^2(Z_{i},Z_{i})\e_{n,i}^2,
\end{aligned}
\]
with
\[
\begin{aligned}
\phi_{n,t}(\e_i) & = k'\Big( \frac{t-\e_i}{a_n} \Big)
- E \Big[ k' \Big( \frac{t-\e_1}{a_n} \Big) \Big], \\
\bar A_{ni}^2 & =\int \bar A_n^2(Z_i,z)g(z)\, dz , \\
\tilde\delta_{n,i} & = \frac{1}{n}\sum_{j:j\neq i}\bar A_n^2(Z_i,Z_j)\e_{n,j}.
\end{aligned}
\]
Thus we are left to show that
\bel{wu}
\suptn |U_{n,\nu}(t) | = o_p(1), \quad \nu=1,\dots,6.
\ee
For $\nu=1,2,3$ we verify \refeq{wu} with the aid of Lemma \ref{lm:aux.2}.
In each case, \refeq{u.2} is a consequence of the Lipschitz continuity of $k'$.
Thus we only check \refeq{u.1a}.

Note that $U_{n,1}(t)$ is a degenerate U-statistic of order 3.
By \refeq{ab}, its kernel $u_{n,t}$ satisfies $\supt \| u_{n,t}\|_{\infty}
= O(n^{-1} a_n^{-2} c_n^{-2} n^{2/\gamma})$ and
$\supt \|u_{n,t}\|^2_2= O(n^{-2} a_n^{-3} c_n^{-2})$.
Since $(n^{-2} a_n^{-3} c_n^{-2})^{1/3}
+ (n^{-1} a_n^{-2} c_n^{-2} n^{2/\gamma})^{1/2}n^{-1/4} = o((\log n)^{-1})$,
we have \refeq{u.1a} and hence obtain \refeq{wu} for $\nu=1$.

Note that $U_{n,2}(t)$ is a degenerate U-statistic of order 2.
Its kernel $u_{n,t}$ satisfies
$\supt \| u_{n,t}\|_{\infty} = O(n^{-3/2} a_n^{-2} c_n^{-2} n^{2/\gamma}$
and $\supt \|u_{n,t}\|^2_2=O(n^{-3} a_n^{-3} c_n^{-3})$.
Thus we have \refeq{u.1a} and hence \refeq{wu} for $\nu=2$.

Finally, $U_{n,3}(t)$ is a degenerate U-statistic of order 2.
Its kernel $u_{n,t}$ satisfies
\[
\supt \| u_{n,t}\|_{\infty} = O(n^{-3/2} a_n^{-2} c_n^{-2}).
\]
Thus we have \refeq{u.1a} and hence \refeq{wu} for $\nu=3$.

We apply Lemma \ref{lm:aux.1} to obtain \refeq{wu} with $\nu=4$.
We have \refeq{a.1a} since its left-hand side is of order
$n^{-1} a_n^{-3} c_n^{-2} + n^{-1/2} a_n^{-2} c_n^{-1}$.
Of course, \refeq{a.2} follows since $k'$ is Lipschitz.
Thus we can apply Lemma \ref{lm:aux.1} and conclude \refeq{wu}
with $\nu=4$.

We obtain from \refeq{fe2} and \refeq{fe3} with $\phi=k'$ that
\[
\suptn \avi|\phi_{n,t}(\e_i)| = O_p(a_n).
\]
Since $\maxi |A_n(Z_{i},Z_{i})\e_{n,i}| = O_p(c_n^{-1}n^{1/\gamma})$,
we obtain from \refeq{dmax} that
\[
\maxi |\tilde\delta_{n,i}| = O_p(q_n).
\]
Thus we have
\[
\begin{aligned}
\suptn |U_{n,5}(t)| & = O_p(q_n a_n^{-1} c_n^{-1} n^{1/\gamma} n^{-1/2}) =o_p(1) \\
\suptn |U_{n,6}(t)| & = O_p(q_n a_n^{-1} n^{-3/2} c_n^{-2} n^{2/\gamma}) 
=o_p(1).
\end{aligned}
\]


\b\n
Ursula U. M\"uller \\
Fachbereich 3: Mathematik und Informatik \\
Universit\"at Bremen \\
Postfach 330 440 \\
28334 Bremen, Germany

\b\n
Anton Schick \\
Department of Mathematical Sciences \\
Binghamton University \\
Binghamton, New York 13902-6000, USA

\b\n
Wolfgang Wefelmeyer \\
Mathematisches Institut \\
Universit\"at zu K\"oln \\
Weyertal 86-90 \\
50931 K\"oln, Germany


\begin{thebibliography}{20}


\bibitem{av01}
Akritas, M. G. and Van Keilegom, I. (2001).
Non-parametric estimation of the residual distribution.
\emph{Scand.\ J. Statist.}\ \tf{28}, 549--567.

\bibitem{ag93}
Arcones, M. A. and Gin\'e, E. (1993).
Limit theorems for U-processes.
\emph{Ann.\ Probab.}\ \tf{21}, 1494--1542.

\bibitem{bai96}
Bai, J. (1996).
Testing for parameter constancy in linear regressions: An empirical
distribution function approach.
\emph{Econometrica}\ \tf{64}, 597--622.

\bibitem{bkrw98}
Bickel, P. J., Klaassen, C. A. J., Ritov, Y. and Wellner, J. A. (1998).
\emph{Efficient and Adaptive Estimation for Semiparametric Models}.
Springer, New York.

\bibitem{br00}
Bickel, P. J. and Ritov, Y. (2000).
Non- and semiparametric statistics: Compared and contrasted.
\emph{J. Statist.\ Plann.\ Inference}\ \tf{91}, 209--228.

\bibitem{br03}
Bickel, P. J. and Ritov, Y. (2003).
Non-parametric estimators which can be ``plugged-in''.
\emph{Ann. Statist.}\ \tf{31}, 1033--1053.

\bibitem{ce92}
Carter, C. K. and Eagleson, G. K. (1992).
A comparison of variance estimators in nonparametric regression.
\emph{J. Roy.\ Statist.\ Soc.\ Ser.\ B. (Methodological)}\
\tf{54}, 773--780.

\bibitem{dmw98}
Dette, H., Munk, A. and Wagner, T. (1998).
Estimating the variance in nonparametric regression ---
what is a reasonable choice?
\emph{J. Roy.\ Statist.\ Soc.\ Ser.\ B. (Methodological)}\
\tf{60}, 751--764.

\bibitem{dmw99}
Dette, H., Munk, A. and Wagner, T. (1999).
A review of variance estimators with extensions to multivariate
nonparametric regression models.
In: \emph{Multivariate Analysis, Design of Experiments,
and Survey Sampling} (S. Ghosh, ed.), 469--498,
Statistics: Textbooks and Monographs 159,
Dekker, New York.

\bibitem{fg96}
Fan, J. and Gijbels, I. (1996).
\emph{Local polynomial modelling and its applications}.
Monographs on Statistics and Applied Probability 66,
Chapman \& Hall, London.

\bibitem{gr98}
Ghoudi, K. and R\'emillard, B. (1998).
Empirical processes based on pseudo-observations.
In: \emph{Asymptotic Methods in Probability and Statistics}
(B. Szyszkowicz, ed.), 171--197, North-Holland, Amsterdam.

\bibitem{hm90}
Hall, P. and Marron, J. S. (1990).
On variance estimation in nonparametric regression.
\emph{Biometrika}\ \tf{77}, 415--419.

\bibitem{hoeff63}
Hoeffding, W. (1963).
Probability inequalities for sums of bounded random variables.
\emph{J. Amer.\ Statist.\ Assoc.}\ \tf{58}, 13--30.

\bibitem{kp97}
Klaassen, C. A. J. and Putter, H. (1997).
Efficient estimation of the error distribution
in a semiparametric linear model.
In: \emph{Contemporary Multivariate Analysis and Its Applications}
(K. T. Fang and F. J. Hickernell, eds.), 1--8,
Hong Kong Baptist University.

\bibitem{kp01}
Klaassen, C. A. J. and Putter, H. (2001).
Efficient estimation of Banach parameters in semiparametric models. \\
Available at: http://www.medstat.medfac.leidenuniv.nl/ms/Hp/.

\bibitem{kosh96}
Koshevnik, Yu. A. (1996).
Semiparametric estimation of a symmetric error distribution
from regression models.
\emph{Publ.\ Inst.\ Statist.\ Univ.\ Paris}\ \tf{40}, 77--91.

\bibitem{koul69}
Koul, H. L. (1969).
Asymptotic behavior of Wilcoxon type confidence regions in multiple linear
regression.
\emph{Ann.\ Math.\ Statist.}\ \tf{40}, 1950--1979.

\bibitem{koul70}
Koul, H. L. (1970).
Some convergence theorems for ranks and weighted empirical cumulatives.
\emph{Ann.\ Math.\ Statist.}\ \tf{41}, 1273--1281.

\bibitem{koul87}
Koul, H. L. (1987).
Tests of goodness-of-fit in linear regression.
\emph{Colloq.\ Math.\ Soc.\ J\'anos Bolyai}\ \tf{45}, 279--315.

\bibitem{koul02}
Koul, H. L. (2002).
\emph{Weighted Empirical Processes in Dynamic Nonlinear Models,
2nd ed.}
Lecture Notes in Statistics 166, Springer, New York, 2002.

\bibitem{kl89}
Koul, H. L. and Levental, S. (1989).
Weak convergence of the residual empirical process in explosive
autoregression.
\emph{Ann.\ Statist.}\ \tf{17}, 1784--1794.

\bibitem{levit75}
Levit, B. Y. (1975).
Conditional estimation of linear functionals.
\emph{Problems Inform.\ Transmission}\ \tf{11}, 39--54.

\bibitem{loyn80}
Loynes, R. M. (1980).
The empirical distribution function of residuals from generalised
regression.
\emph{Ann.\ Statist.}\ \tf{8}, 285--299.

\bibitem{mammen96}
Mammen, E. (1996).
Empirical process of residuals for high-dimensional linear models.
\emph{Ann.\ Statist.}\ \tf{24}, 307--335.

\bibitem{msw03}
M\"uller, U. U., Schick, A. and Wefelmeyer W. (2003).
Estimating the error variance in nonparametric regression
by a covariate-matched U-statistic.
\emph{Statistics}\ \tf{37}, 179-188.

\bibitem{msw4a}
M\"uller, U. U., Schick, A. and Wefelmeyer W. (2004a).
Estimating linear functionals of the error distribution
in nonparametric regression.
\emph{J. Statist. Plann. Inference}\ \tf{119}, 75--93.

\bibitem{msw04b}
M\"uller, U. U., Schick, A. and Wefelmeyer W. (2004b).
Estimating functionals of the error distribution in parametric 
and nonparametric regression.
\emph{J. Nonparametr. Statist.}\ \tf{16}, 525--548.

\bibitem{sw04c}
M\"uller, U. U., Schick, A. and Wefelmeyer W. (2004c).
Weighted residual-based density estimators for nonlinear autoregressive models.
To appear in: \emph{Statist.\ Sinica}.

\bibitem{owen88}
Owen, A. B. (1988).
Empirical likelihood ratio confidence intervals for a single functional.
\emph{Biometrika}\ \tf{75}, 237--249.

\bibitem{owen90}
Owen, A. B. (1990).
Empirical likelihood ratio confidence regions.
\emph{Ann.\ Statist.}\ \tf{18}, 90--120.

\bibitem{owen01}
Owen, A. B. (2001).
\emph{Empirical Likelihood}.
Monographs on Statistics and Applied Probability 92,
Chapman \& Hall/CRC, Boca Raton, FL.

\bibitem{sw02a}
Schick, A. and Wefelmeyer W. (2002a).
Estimating the innovation distribution in nonlinear
autoregressive models.
\emph{Ann. Inst. Statist. Math.}\ \tf{54}, 245-260.

\bibitem{sw02b}
Schick, A. and Wefelmeyer W. (2002b).
Efficient estimation in invertible linear processes.
\emph{Math. Methods Statist.}\ \tf{11}, 358--379.

\bibitem{sw04}
Schick, A. and Wefelmeyer W. (2004).
Root n consistent density estimators for invertible linear processes. \\
Available at: http://www.math.binghamton.edu/anton/preprint.html.

\bibitem{shor84}
Shorack, G. R. (1984).
Empirical and rank processes of observations and residuals.
\emph{Canad.\ J. Statist.}\ \tf{12}, 319--332.

\bibitem{sw86}
Shorack, G. R. and Wellner, J. A. (1986).
\emph{Empirical Processes with Applications to Statistics}.
Wiley Series in Probability and Mathematical Statistics, Wiley, New York.

\bibitem{silv78}
Silverman, B. W. (1978).
Weak and strong uniform consistency of the kernel estimate
of a density and its derivatives.
\emph{Ann.\ Statist.}\ \tf{6}, 177--184.


\end{thebibliography}
\end{document}